\documentclass[reqno]{amsart}
\usepackage{hyperref}
\usepackage{graphicx}
\usepackage[latin1]{inputenc}
\usepackage[english,activeacute]{babel}

\numberwithin{equation}{section}
\newtheorem{theorem}{Theorem}[section]
\newtheorem{lemma}[theorem]{Lemma}

\newtheorem{proposition}[theorem]{Proposition}

\title[Fractional logarithmic Schr\"{o}dinger equation]
{Existence and stability of standing waves for nonlinear fractional  Schr\"{o}dinger equation with logarithmic nonlinearity}
\author[Alex H. Ardila]{}
\email{alexha@ime.usp.br}

\thanks{The  author was supported by CNPq-Brazil}

\subjclass[2010]{35Q55; 35Q40}
\keywords{Fractional logarithmic Schr\"{o}dinger equation; standing waves;  stability}

\begin{document}

\maketitle

%\address{Alex Hernandez Ardila \newline
%Department of Mathematics, IME-USP, Cidade Universit\'aria,
%CEP 05508-090, S\~ao Paulo, SP, Brazil}
%\email{Email: alexha@ime.usp.br}

\centerline{\scshape Alex H. Ardila}
%\medskip
{\footnotesize
% please put the address of the first author
 \centerline{Department of Mathematics, IME-USP, Universidade de S\~ao Paulo}
\centerline{Cidade Universit\'aria, CEP 05508-090, S\~ao Paulo, SP, Brazil}
} % Do not forget to end the {\footnotesize by the sign }

\begin{abstract}
In this paper we consider the nonlinear fractional logarithmic Schr\"{o}dinger equation.  By using a compactness method, we construct a unique global solution of the associated Cauchy problem in a suitable functional framework.  We also prove the existence of ground states as minimizers of the action on the Nehari manifold. Finally, we prove that the set of minimizers is a stable set for the initial value problem, that is, a solution whose initial data is near the set will remain near it for all time.
\end{abstract}

\section{Introduction}
\label{S:0}
This paper is concerned with the fractional nonlinear Schr\"{o}dinger equation with logarithmic nonlinearity
\begin{equation}
\label{0NL}
  i\partial_{t}u-(-\Delta)^{s} u+\,u\,\mathrm{Log}\,\left|u\right|^{2}=0,
\end{equation}
where $0<s<1$ and $u=u(x,t)$ is a complex-valued function of $(x,t)\in \mathbb{R}^{N}\times\mathbb{R}$, $N\geq2$. The fractional Laplacian 
$(-\Delta)^{s}$ is defined via Fourier transform as
\begin{equation}\label{Flapla}
\mathcal{F}\left[(-\Delta)^{s}u\right](\xi)=|\xi|^{2s}\mathcal{F}u(\xi),
\end{equation}
where the Fourier transform is given  by 
\begin{equation}\label{TRFou}
\mathcal{F}u(\xi)=\frac{1}{(2\pi)^{N/2}}\int_{ \mathbb{R}^{N}}u(x)e^{-i\xi\cdot x}dx.
\end{equation}
The fractional Laplacian $(-\Delta)^{s}$ is a self-adjoint operator on $L^{2}(\mathbb{R}^{N})$ with quadratic form domain $H^{s}(\mathbb{R}^{N})$ and operator domain $H^{2s}(\mathbb{R}^{N})$. Moreover,  the following spectral properties of  $(-\Delta)^{s}$ are known: $\sigma_{\rm ess}((-\Delta)^{s})=[0,\infty)$ and  $\sigma_{\rm p}((-\Delta)^{s})=\emptyset$ (see, e.g., \cite[Example 3.3]{JLFHVB}).
The nonlocal  operator $(-\Delta)^{s}$  can be seen as the infinitesimal generators of Lévy stable diffusion processes (see \cite{DAP}). Fractional powers of the Laplacian arise in a numerous variety of equations in mathematical physics and related fields; see, e.g., \cite{DAP, DAPPL,XGMX} and references therein.  Recently, a great attention has been focused on the study of problems involving the fractional Laplacian from a pure mathematical point of view. Concerning the fractional Schr\"{o}dinger  equations, let us mention \cite{YCHHG, XCYS,BGHJ, BGAZH, BGDHL,PDAMS,SB1, CHHHHW, Haee333, HH123, DWU}. 

The present paper is devoted to the analysis of existence and stability of standing waves of NLS \eqref{0NL}. If the fractional Laplacian in \eqref{0NL} is replaced by a standard Laplacian, this problem is well-known and described in detail in \cite{AHA1, CAS, CL,CALO,PHST,PHBJ}.
In this case, one can show that that there exists a unique (up to translations and phase shifts) ground state and it is orbitally stable.
The classical logarithmic NLS equation was proposed by Bialynicki-Birula and Mycielski \cite{CAS} in 1976 as a model of nonlinear wave mechanics and  has  important applications in  quantum mechanics, quantum optics, nuclear physics, open quantum systems and  Bose-Einstein condensation (see e.g. \cite{HE123, APLES} and the references therein).

The energy functional $E$ associated with problem \eqref{0NL} is 
\begin{equation}\label{EEEE}
E(u)=\frac{1}{2}\int_{\mathbb{R}^{N}}|(-\Delta)^{s/2} u|^{2}dx-\frac{1}{2}\int_{\mathbb{R}^{N}}\left|u\right|^{2}\mbox{Log}\left|u\right|^{2}dx.
\end{equation}
Unfortunately, due to the singularity of the logarithm at the origin, the functional fails to be finite as well of class $C^{1}$ on $H^{s}(\mathbb{R}^{N})$. Due  to  this  loss  of  smoothness,  it is convenient  to work in a suitable Banach space endowed with a Luxemburg type norm in order to make functional $E$  well defined and $C^{1}$ smooth. This space allows to control the singularity of the logarithmic
nonlinearity at infinity and at the origin. Indeed, we consider the reflexive Banach space (see Section \ref{S:1})
\begin{equation}\label{ASE}
W^{s}(\mathbb{R}^{N})=\left\{u\in H^{s}(\mathbb{R}^{N}):\left|u\right|^{2}\mathrm{Log}\left|u\right|^{2}\in L^{1}(\mathbb{R}^{N})\right\},
\end{equation}
then the energy functional $E$ is well-defined and of class $C^1$ on $W^{s}({\mathbb R}^{N})$.  Moreover, from Lemma \ref{APEX23},  we have that the operator $ u\rightarrow (-\Delta)^{s}u-u\,  \mathrm{Log}\left|u\right|^{2}$ is continuous from  $W^{s}(\mathbb{R}^{N})$  to $W^{-s}(\mathbb{R}^{N})$. Here, ${W}^{-s}(\mathbb{R}^{N})$ is the dual space of ${W}^{s}(\mathbb{R}^{N})$. Therefore,  if $u\in C(\mathbb{R}, W^{s}(\mathbb{R}^{N}))\cap C^{1}(\mathbb{R}, W^{-s}(\mathbb{R}^{N}))$, then equation \eqref{0NL} makes sense in $W^{-s}(\mathbb{R}^{N})$.

The next proposition gives a result on the existence of weak solutions to \eqref{0NL} in the energy space $W^{s}({\mathbb R}^{N})$. The proof is contained in Section \ref{S:2}.

\begin{proposition} \label{PCS}
For any $u_{0}\in {W}^{s}({\mathbb{R}}^{N})$, there is a unique  solution  $u\in C(\mathbb{R},{W}^{s}({\mathbb{R}^{N}}))\cap C^{1}(\mathbb{R}, {W}^{-s}(\mathbb{R}^{N}))$  of \eqref{0NL}  such that $u(0)=u_{0}$ and $\sup_{t\in \mathbb{R}}\left\|u(t)\right\|_{{W}^{s}({\mathbb{R}^{N}})}<\infty$. Furthermore, the conservation of energy and charge holds; that is, 
\begin{equation*}
E(u(t))=E(u_{0})\quad  and \quad \left\|u(t)\right\|^{2}_{L^{2}}=\left\|u_{0}\right\|^{2}_{L^{2}}\quad  \text{for all $t\in \mathbb{R}$}.
\end{equation*}
\end{proposition}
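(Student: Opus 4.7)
The plan is to follow the compactness scheme of Cazenave for the logarithmic NLS, adapted to the fractional setting with the linear propagator $e^{-it(-\Delta)^s}$. The logarithmic nonlinearity $f(u)=u\,\mathrm{Log}|u|^2$ is not Lipschitz at the origin, so the usual fixed-point approach in $H^s$ fails and one must regularize, derive uniform bounds, and then pass to the limit.

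First I would introduce a regularization: for $m\geq1$, replace $f$ by a globally Lipschitz truncation $f_m(u)=u\,\mathrm{Log}(|u|^2/(1+\tfrac{1}{m}|u|^2))$ (or a similar smoothing that preserves the gauge symmetry $f_m(e^{i\theta}u)=e^{i\theta}f_m(u)$ and such that $f_m\to f$ pointwise). Since $f_m$ is Lipschitz from $H^s(\mathbb{R}^N)$ to $H^s(\mathbb{R}^N)$ and the free propagator is unitary on $H^s$, a standard fixed-point argument yields a unique global solution $u_m\in C(\mathbb{R},H^s(\mathbb{R}^N))$ with the conservation of the regularized energy $E_m$ and of $\|u_m\|_{L^2}^2$. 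The next step is to obtain uniform bounds in $W^s(\mathbb{R}^N)$. Combining the $L^2$-conservation with a logarithmic Sobolev inequality of the form $\int|u|^2\mathrm{Log}|u|^2\,dx\le \varepsilon\|(-\Delta)^{s/2}u\|_{L^2}^2+C_\varepsilon\|u\|_{L^2}^2+\|u\|_{L^2}^2\mathrm{Log}\|u\|_{L^2}^2$, the conservation of $E_m(u_m)=E_m(u_0)$ yields $\sup_{t\in\mathbb{R}}\|u_m(t)\|_{H^s}\le C(u_0)$ and, by a matching lower bound on $\int|u_m|^2|\mathrm{Log}|u_m|^2|\,dx$, also a uniform bound in $W^s(\mathbb{R}^N)$.

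With these bounds in hand, I would extract (on any compact interval $I$) a subsequence such that $u_m\rightharpoonup u$ in $L^\infty(I;H^s(\mathbb{R}^N))$ weakly-$*$ and, via the equation, $\partial_t u_m$ is bounded in $L^\infty(I;W^{-s}(\mathbb{R}^N))$. An Aubin--Lions type compactness argument (using local compactness of $H^s\hookrightarrow L^2_{\rm loc}$) gives $u_m\to u$ strongly in $L^2_{\rm loc}$ and a.e., which together with the continuity property of $f$ stated after \eqref{ASE} (via Lemma \ref{APEX23}) is enough to identify $f_m(u_m)\to f(u)$ in the sense of distributions. Hence $u$ satisfies \eqref{0NL} in $W^{-s}(\mathbb{R}^N)$ with $u(0)=u_0$, and lower semicontinuity gives $u\in L^\infty(\mathbb{R};W^s(\mathbb{R}^N))$.

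The key technical obstacle, and what drives uniqueness, is the non-Lipschitz character of the logarithm. Here one uses the classical inequality
\begin{equation*}
\bigl|\mathrm{Im}\bigl[\bigl(z_1\mathrm{Log}|z_1|^2-z_2\mathrm{Log}|z_2|^2\bigr)\overline{(z_1-z_2)}\bigr]\bigr|\le C\,|z_1-z_2|^2,\qquad z_1,z_2\in\mathbb{C},
\end{equation*}
which, applied to two solutions $u,v$ of \eqref{0NL} with the same data and integrated against $(u-v)$ using the self-adjointness of $(-\Delta)^s$, gives $\tfrac{d}{dt}\|u-v\|_{L^2}^2\le C\|u-v\|_{L^2}^2$ and hence $u\equiv v$ by Gr\"onwall. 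The same inequality, applied to a solution and its time-translate or to the sequence $u_m$, upgrades the weak-$*$ continuity to strong $L^2$ continuity, and together with weak continuity in $H^s$ and the lower semicontinuity of the energy yields $E(u(t))\le E(u_0)$; the reverse inequality follows by time reversibility, giving conservation of energy, while conservation of $\|u(t)\|_{L^2}^2$ is immediate by pairing the equation with $\bar u$. Continuity in $W^s(\mathbb{R}^N)$ then follows from the conservation of $E$ and $\|u\|_{L^2}$ together with weak continuity. The hardest step is producing the uniform $W^s$ bound (one must handle the logarithmic singularity at $0$ as well as the growth at infinity in a scale-invariant way), and establishing the uniqueness inequality; all other steps are then standard compactness bookkeeping.
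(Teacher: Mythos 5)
Your overall scheme coincides with the paper's: regularize the nonlinearity, solve the regularized problems globally with conservation laws, derive uniform $H^{s}$ and $W^{s}$ bounds from the (fractional) logarithmic Sobolev inequality, pass to the limit by an Aubin--Lions/Cazenave-type compactness argument, and prove uniqueness from the pointwise inequality $\bigl|\mathrm{Im}\bigl[\bigl(z_{1}\mathrm{Log}|z_{1}|^{2}-z_{2}\mathrm{Log}|z_{2}|^{2}\bigr)\overline{(z_{1}-z_{2})}\bigr]\bigr|\leq C|z_{1}-z_{2}|^{2}$ plus Gronwall. Two details in your write-up need repair. First, the displayed regularization $f_{m}(u)=u\,\mathrm{Log}\bigl(|u|^{2}/(1+\tfrac{1}{m}|u|^{2})\bigr)$ only truncates the logarithm at infinity: near $u=0$ it still behaves like $u\,\mathrm{Log}|u|^{2}$, whose derivative diverges, so this $f_{m}$ is \emph{not} globally Lipschitz. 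One needs a two-sided truncation, e.g. $u\,\mathrm{Log}\bigl((|u|^{2}+\tfrac{1}{m})/(1+\tfrac{1}{m}|u|^{2})\bigr)$, or, as the paper does, truncate separately the convex part $a$ near the origin and the part $b$ at infinity (the functions $a_{m}$, $b_{m}$ of Section \ref{S:2}). Second, even for a genuinely globally Lipschitz $f_{m}$, your claim that $f_{m}$ is Lipschitz from $H^{s}(\mathbb{R}^{N})$ to $H^{s}(\mathbb{R}^{N})$ (so that a contraction can be run directly in $C(\mathbb{R},H^{s})$) is not justified for fractional $0<s<1$: composition by a merely Lipschitz function is bounded on $H^{s}$ but not obviously Lipschitz continuous there. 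The paper avoids this by using only that $g_{m}$ is Lipschitz from $H^{s}$ to $H^{-s}$ (through $L^{2}\rightarrow L^{2}$) and invoking an existence result for weak $H^{s}$-valued solutions with an energy \emph{inequality}, which is then upgraded to equality via $L^{2}$-uniqueness and time translation. With these two repairs your argument matches the paper's proof.
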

In this paper we study the existence and stability of standing waves solutions of \eqref{0NL} of the form $u(x,t)=e^{i\omega t}\varphi(x)$, 
where $\omega\in \mathbb{R}$ and $\varphi\in {W}^{s}({{\mathbb{R}^{N}}})$ is a complex valued function which has to solve the following
stationary problem
\begin{equation}\label{EP}
(-\Delta)^{s} \varphi+\omega \varphi-\varphi\, \mathrm{Log}\left|\varphi \right|^{2}=0, \quad x\in {\mathbb{R}}^{N}.
\end{equation}
For $\omega\in \mathbb{R}$, we define the following functionals of class $C^{1}$ on $W^{s}(\mathbb{R}^{N})$:
\begin{align*}
 S_{\omega}(u)&=\frac{1}{2}\int_{\mathbb{R}^{N}}|(-\Delta)^{s/2} u|^{2}dx+\frac{\omega+1}{2}\int_{\mathbb{R}^{N}}\left|u\right|^{2}dx-\frac{1}{2}\int_{\mathbb{R}^{N}}\left|u\right|^{2}\mbox{Log}\left|u\right|^{2}dx,\\
  I_{\omega}(u)&=\int_{\mathbb{R}^{N}}|(-\Delta)^{s/2} u|^{2}dx+\omega\int_{\mathbb{R}^{N}}\left|u\right|^{2}dx-\int_{\mathbb{R}^{N}}\left|u\right|^{2}\mbox{Log}\left|u\right|^{2}dx.
\end{align*}
Note that \eqref{EP} is equivalent to $S^{\prime}_{\omega}(\varphi)=0$, and $I_{\omega}(u)=\left\langle S_{\omega}^{\prime}(u),u\right\rangle$ is the so-called Nehari functional. It was shown in \cite{PDAMS} that the problem \eqref{EP} admits a sequence of weak solutions  $u_{n}\in H^{s}(\mathbb{R}^{N})$ with $S_{\omega}(u_{n})\rightarrow +\infty$ as $n\rightarrow +\infty$.

From the physical point viewpoint, an important role is played by the ground state solution of \eqref{EP}. We recall that a solution $\varphi\in {W}^{s}({\mathbb{R}^{N}})$ of \eqref{EP} is termed as a ground state if it has some minimal action among all solutions of \eqref{EP}. To be more specific,  we consider the minimization problem
\begin{align}
\begin{split}\label{MPE}
d(\omega)&={\inf}\left\{S_{\omega}(u):\, u\in {W}^{s}({\mathbb{R}^{N}})  \setminus  \left\{0 \right\},  I_{\omega}(u)=0\right\} \\ 
&=\frac{1}{2}\,{\inf}\left\{\left\|u\right\|_{L^{2}({\mathbb{R}^{N}})}^{2}:u\in  {W}^{s}({\mathbb{R}^{N}}) \setminus \left\{0 \right\},  I_{\omega}(u)= 0 \right\}, 
\end{split}
\end{align}
and define the set of ground states  by
\begin{equation*}
 \mathcal{G}_{\omega}=\bigl\{ \varphi\in {W}^{s}({\mathbb{R}^{N}}) \setminus  \left\{0 \right\}: S_{\omega}(\varphi)=d(\omega), \quad I_{\omega}(\varphi)=0\bigl\}.
\end{equation*}
The set $\bigl\{u\in{W}^{s}({\mathbb{R}^{N}}) \setminus  \left\{0 \right\},  I_{\omega}(u)=0\bigl\}$ is called the Nehari manifold. Notice that  the above set contains all stationary points of $S_{\omega}$.

The existence of minimizers for \eqref{MPE} will be obtained as a consequence of the stronger statement that any minimizing sequence for \eqref{MPE} is, up to translation, precompact in $W^{s}(\mathbb{R}^{N})$. We will show the following theorem in  Section \ref{S:3}.
\begin{theorem} \label{ESSW}
Let $N\geq 2$, $\omega\in \mathbb{R}$ and $0<s<1$. Let $\left\{ u_{n}\right\}\subseteq W^{s}(\mathbb{R}^{N})$ be a minimizing sequence for $d(\omega)$. Then there exists a family $(y_{n})\subset \mathbb{R}^{N}$ such that $\left\{u_{n}(\cdot-y_{n})\right\}$ contents a convergent subsequence in $W^{s}(\mathbb{R}^{N})$. In particular, this implies that $\mathcal{G}_{\omega}$ is not empty set for any $\omega\in \mathbb{R}$.
\end{theorem}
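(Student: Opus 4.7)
The plan is to establish compactness of minimizing sequences modulo translation via a concentration--compactness argument tailored to the logarithmic nonlinearity, in the spirit of Cazenave's treatment of the classical log-NLS.

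First, I would verify that the minimizing sequence is bounded in $W^{s}(\mathbb{R}^{N})$ and that $d(\omega)>0$. The identity $S_{\omega}=\tfrac{1}{2}I_{\omega}+\tfrac{1}{2}\|\cdot\|_{L^{2}}^{2}$ combined with $I_{\omega}(u_{n})=0$ gives $S_{\omega}(u_{n})=\tfrac{1}{2}\|u_{n}\|_{L^{2}}^{2}\to d(\omega)$, yielding a uniform $L^{2}$-bound. Solving the Nehari identity for $\|(-\Delta)^{s/2}u_{n}\|_{L^{2}}^{2}$ and splitting the logarithmic integral on $\{|u_{n}|<1\}$ versus $\{|u_{n}|\geq 1\}$, the elementary estimate $|t|^{2}|\log t^{2}|\leq C_{\varepsilon}(|t|^{2-\varepsilon}+|t|^{2+\varepsilon})$ together with the fractional Sobolev embedding $H^{s}(\mathbb{R}^{N})\hookrightarrow L^{2+\varepsilon}(\mathbb{R}^{N})$ yields the $H^{s}$-bound by absorption; boundedness of the Luxemburg part of the $W^{s}$-norm follows from the same estimate. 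The lower bound $d(\omega)>0$ is obtained from the scaling identity
\[
I_{\omega}(\lambda u)=\lambda^{2}\bigl(I_{\omega}(u)-2(\log\lambda)\|u\|_{L^{2}}^{2}\bigr),
\]
which shows the unique $\lambda(u)>0$ with $I_{\omega}(\lambda(u)u)=0$ satisfies $\|\lambda(u)u\|_{L^{2}}^{2}=\exp\!\bigl(I_{\omega}(u)/\|u\|_{L^{2}}^{2}\bigr)\|u\|_{L^{2}}^{2}$, a quantity bounded away from zero on $W^{s}(\mathbb{R}^{N})\setminus\{0\}$ via the $H^{s}$-bound above.

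Second, I would apply Lions' concentration--compactness lemma in $H^{s}$ to the bounded sequence $\{u_{n}\}$. The vanishing alternative would give $u_{n}\to 0$ in $L^{p}(\mathbb{R}^{N})$ for every $p\in(2,2_{s}^{*})$; the same elementary bounds on $|t|^{2}|\log t^{2}|$ together with interpolation would then force $\int|u_{n}|^{2}\log|u_{n}|^{2}\to 0$, and the Nehari identity combined with $\|u_{n}\|_{L^{2}}^{2}\to 2d(\omega)>0$ produces a contradiction. Hence there exist $y_{n}\in\mathbb{R}^{N}$ and $u^{*}\in H^{s}(\mathbb{R}^{N})\setminus\{0\}$ such that $\tilde{u}_{n}:=u_{n}(\cdot-y_{n})\rightharpoonup u^{*}$ weakly in $H^{s}$ (and a.e., along a subsequence).

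Third, I would identify $u^{*}$ as an element of $\mathcal{G}_{\omega}$. Weak lower semicontinuity gives $\|u^{*}\|_{L^{2}}^{2}\leq 2d(\omega)$, and a Brezis--Lieb-type splitting applied to each summand of $I_{\omega}$ (including a Brezis--Lieb identity for the logarithmic term, which holds since the integrand is pointwise a.e.\ convergent and its positive and negative parts are each equi-integrable by the bounds above) yields $I_{\omega}(u^{*})\leq 0$. If $I_{\omega}(u^{*})<0$, the choice $\lambda=\exp\!\bigl(I_{\omega}(u^{*})/(2\|u^{*}\|_{L^{2}}^{2})\bigr)<1$ gives $I_{\omega}(\lambda u^{*})=0$ and $\|\lambda u^{*}\|_{L^{2}}^{2}<\|u^{*}\|_{L^{2}}^{2}\leq 2d(\omega)$, contradicting the definition of $d(\omega)$. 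Therefore $I_{\omega}(u^{*})=0$ and $S_{\omega}(u^{*})=\tfrac{1}{2}\|u^{*}\|_{L^{2}}^{2}=d(\omega)$, so $u^{*}\in\mathcal{G}_{\omega}$; in particular $\|\tilde{u}_{n}\|_{L^{2}}\to\|u^{*}\|_{L^{2}}$.

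Finally, weak $L^{2}$-convergence together with norm convergence upgrades to strong $L^{2}$-convergence of $\tilde{u}_{n}\to u^{*}$; the equality $I_{\omega}(\tilde{u}_{n})=I_{\omega}(u^{*})=0$ then forces the fractional $\dot{H}^{s}$-seminorms and the logarithmic integrals to converge as well, giving strong convergence in $W^{s}(\mathbb{R}^{N})$. The principal obstacle throughout is the logarithmic term: its singularity at the origin and sublinear growth at infinity rule out smoothness-based arguments, so both the Brezis--Lieb identity for $|u|^{2}\log|u|^{2}$ and the equi-integrability needed in the concluding step must be established via careful splitting and interpolation, with the required uniform bounds extracted solely from the Nehari constraint $I_{\omega}(u_{n})=0$.
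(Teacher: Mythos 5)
Your overall architecture (boundedness, exclusion of vanishing, identification of the translated weak limit as a Nehari minimizer, upgrade to strong convergence) parallels the paper's, but two key steps rest on the elementary bound $|t|^{2}|\mathrm{Log}\,t^{2}|\leq C_{\varepsilon}(|t|^{2-\varepsilon}+|t|^{2+\varepsilon})$ in the region $\{|u|<1\}$, where it cannot deliver: the exponent $2-\varepsilon$ lies below $2$ and $H^{s}(\mathbb{R}^{N})$ does not embed into $L^{2-\varepsilon}(\mathbb{R}^{N})$, so nothing controls $\int|u_{n}|^{2-\varepsilon}$. This breaks your proof of $d(\omega)>0$: on the Nehari manifold the interpolation bound only gives $\tfrac{1}{2}\|(-\Delta)^{s/2}u\|_{L^{2}}^{2}\leq-\omega\|u\|_{L^{2}}^{2}+C\|u\|_{L^{2}}^{2\beta}$ with $\beta>1$, which forces $\|u\|_{L^{2}}\geq c>0$ only when $\omega>0$; for $\omega\leq 0$ (the theorem covers all $\omega\in\mathbb{R}$) it is vacuous. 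The missing ingredient is exactly the term $\|u\|_{L^{2}}^{2}\,\mathrm{Log}\|u\|_{L^{2}}^{2}$ that the fractional logarithmic Sobolev inequality (Lemma \ref{L1}) supplies; the paper uses it both to prove $d(\omega)\geq\tfrac{1}{2}\bigl(s\Gamma(N/2)/\Gamma(N/2s)\bigr)\pi^{N/2}e^{\omega+N}>0$ (Lemma \ref{L2}) and to get the $H^{s}$-bound on the minimizing sequence. Since your exclusion of vanishing and your identification of the limit both invoke $2d(\omega)>0$, this gap propagates through the whole argument.

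Relatedly, your assertion that vanishing forces $\int|u_{n}|^{2}\mathrm{Log}|u_{n}|^{2}\to 0$ is false for the same reason: only the positive part, supported on $\{|u_{n}|\geq 1\}$, is killed by $L^{2+\varepsilon}$-vanishing. In fact, if $u_{n}\to 0$ in $L^{2+\varepsilon}$ while $\|u_{n}\|_{L^{2}}^{2}\to 2d(\omega)>0$, the mass concentrates on $\{|u_{n}|<\delta\}$ and $\limsup_{n}\int|u_{n}|^{2}\mathrm{Log}|u_{n}|^{2}\leq 2d(\omega)\,\mathrm{Log}(\delta^{2})$ for every $\delta\in(0,1)$, so the integral diverges to $-\infty$; the contradiction with $I_{\omega}(u_{n})=0$ comes from this divergence (it would force $\|(-\Delta)^{s/2}u_{n}\|_{L^{2}}^{2}<0$), not from the integral tending to zero. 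The paper avoids the trichotomy altogether by deriving a uniform positive lower bound on $\int|u_{n}|^{q}$ and invoking the Lions-type Lemma \ref{CLLQ}. Finally, the Br\'ezis--Lieb identity for the logarithmic term and the concluding strong convergence cannot be justified by ``equi-integrability of the negative part'' (again the $|t|^{2-\varepsilon}$ issue); the paper handles both via the decomposition $|u|^{2}\mathrm{Log}|u|^{2}=B(|u|)-A(|u|)$ with $A\geq 0$ convex, the estimate \eqref{DB} for $B$, Fatou plus weak lower semicontinuity to split the limit \eqref{2BX1}, and Proposition \ref{orlicz}(ii) to convert $\int A(|v_{n}|)\to\int A(|\varphi|)$ into $L^{A}$-convergence. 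Your strategy is sound in outline, but as written these steps do not close.
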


We remark that for any $u\in \mathcal{G}_{\omega}$,  there exists a Lagrange multiplier  $\Lambda\in \mathbb{R}$ such that $S^{\prime}_{\omega}(u)=\Lambda I^{\prime}_{\omega}(u)$. Thus, we have $\left\langle S^{\prime}_{\omega}( u),u\right\rangle=\Lambda\left\langle  I^{\prime}_{\omega}(u),u\right\rangle$. The fact that   $\left\langle S^{\prime}_{\omega}( u),u\right\rangle =I_{\omega}(u)=0$ and $\left\langle  I^{\prime}_{\omega}(u),u\right\rangle= -2\left\|u_{}\right\|_{L^{2}(\mathbb{R}^{N})}^{2}<0$, implies $\Lambda=0$; that is, $S_{\omega}^{\prime}(u)=0$. Therefore,  $u$ satisfies \eqref{EP}.

Now we are ready to state our  main result, which is a direct consequence of the result of relative compactness.

\begin{theorem} \label{2ESSW}
Let $N\geq 2$, $\omega\in \mathbb{R}$ and $0<s<1$. Then  the set $G_{\omega}$ is $W^{s}(\mathbb{R}^{N})$-stable with respect to NLS \eqref{0NL}; that is, for arbitrary $\epsilon>0$, there exist $\delta>0$ such that for any $u_{0}\in W^{s}(\mathbb{R}^{N})$, if 
\begin{equation*}
\inf_{\psi\in \mathcal{G}_{\omega}} \|u_{0}-\psi\|_{W^{s}(\mathbb{R}^{N})}<\delta,
\end{equation*}
then the solution $u(x,t)$ of the Cauchy problem \eqref{0NL} with the initial data $u_{0}$ satisfies 
\begin{equation*}
\inf_{\psi\in \mathcal{G}_{\omega}} \|u(\cdot,t)-\psi\|_{W^{s}(\mathbb{R}^{N})}<\epsilon, \quad \text{for all $t\geq 0$.}
\end{equation*}
\end{theorem}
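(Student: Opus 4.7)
The plan is to argue by contradiction using the classical Cazenave--Lions stability scheme, combining the precompactness of Theorem \ref{ESSW} with the conservation laws of Proposition \ref{PCS}. Suppose the conclusion fails: there exist $\epsilon_{0}>0$, a sequence $u_{0}^{n}\in W^{s}(\mathbb{R}^{N})$ with $\inf_{\psi\in \mathcal{G}_{\omega}}\|u_{0}^{n}-\psi\|_{W^{s}}\to 0$, and times $t_{n}\geq 0$ such that the global solutions $u_{n}(t)$ of \eqref{0NL} with $u_{n}(0)=u_{0}^{n}$ satisfy
\[
\inf_{\psi\in \mathcal{G}_{\omega}}\|u_{n}(\cdot,t_{n})-\psi\|_{W^{s}}\geq \epsilon_{0}.
\]
Write $v_{n}:=u_{n}(\cdot,t_{n})$. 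The second expression for $d(\omega)$ in \eqref{MPE} forces every $\psi\in \mathcal{G}_{\omega}$ to have the common mass $\|\psi\|_{L^{2}}^{2}=2d(\omega)$, and since $S_{\omega}(\psi)=d(\omega)$ and $I_{\omega}(\psi)=0$, also the common energy $E(\psi)=-\omega d(\omega)$. Picking $\psi_{n}\in \mathcal{G}_{\omega}$ with $\|u_{0}^{n}-\psi_{n}\|_{W^{s}}\to 0$, continuity of $E$ and of $\|\cdot\|_{L^{2}}^{2}$ on $W^{s}$, together with conservation along the flow, yields $E(v_{n})\to -\omega d(\omega)$ and $\|v_{n}\|_{L^{2}}^{2}\to 2d(\omega)$.

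These two relations give $S_{\omega}(v_{n})=E(v_{n})+\tfrac{\omega+1}{2}\|v_{n}\|_{L^{2}}^{2}\to d(\omega)$, and the elementary identity $I_{\omega}(u)=2S_{\omega}(u)-\|u\|_{L^{2}}^{2}$ (obtained by subtracting the two defining formulas) yields $I_{\omega}(v_{n})\to 0$. To convert $\{v_{n}\}$ into a genuine minimizing sequence on the Nehari manifold, I would apply a dilation: a direct computation using the homogeneity of the logarithm gives
\[
I_{\omega}(\lambda v)=\lambda^{2}I_{\omega}(v)-\lambda^{2}\log(\lambda^{2})\,\|v\|_{L^{2}}^{2},
\]
so that $\tilde v_{n}:=\lambda_{n}v_{n}$ satisfies $I_{\omega}(\tilde v_{n})=0$ for the explicit choice $\lambda_{n}^{2}:=\exp\bigl(I_{\omega}(v_{n})/\|v_{n}\|_{L^{2}}^{2}\bigr)$. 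Since $I_{\omega}(v_{n})\to 0$ and $\|v_{n}\|_{L^{2}}^{2}\to 2d(\omega)>0$, one has $\lambda_{n}\to 1$, hence $\|\tilde v_{n}\|_{L^{2}}^{2}\to 2d(\omega)$. By the second formula in \eqref{MPE}, $\{\tilde v_{n}\}$ is a minimizing sequence for $d(\omega)$.

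Theorem \ref{ESSW} then produces a subsequence, translations $y_{n}\in \mathbb{R}^{N}$, and $\varphi\in \mathcal{G}_{\omega}$ with $\tilde v_{n}(\cdot-y_{n})\to \varphi$ in $W^{s}$; combined with $\lambda_{n}\to 1$ and the boundedness of $\{v_{n}\}$ in $W^{s}$, this transfers to $v_{n}(\cdot-y_{n})\to \varphi$ in $W^{s}$, contradicting the choice of $t_{n}$.

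The step I expect to be the main obstacle is justifying, in the Luxemburg-type space $W^{s}$, the continuity of the logarithmic term in $E$ and in $I_{\omega}$ along the $W^{s}$-convergent sequences involved (both at time zero, where $u_{0}^{n}\to \psi_{n}$, and after the dilation by $\lambda_{n}\to 1$). This is precisely the reason the paper introduces $W^{s}$ rather than working in $H^{s}$: the continuity of $E$ on $W^{s}$ asserted in Section \ref{S:1} is exactly what propagates the information from $t=0$ to $t=t_{n}$ through the conservation laws, and without it the whole scheme would collapse.
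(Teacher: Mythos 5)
Your proposal is correct and follows essentially the same route as the paper: a contradiction argument using the conservation of charge and energy to show that the evolved sequence satisfies $S_{\omega}(v_{n})\to d(\omega)$ and $I_{\omega}(v_{n})\to 0$, a rescaling $\rho_{n}=\exp\bigl(I_{\omega}(v_{n})/(2\|v_{n}\|_{L^{2}}^{2})\bigr)$ (identical to your $\lambda_{n}$) to land exactly on the Nehari manifold, and then the precompactness of minimizing sequences from Theorem \ref{ESSW} to reach a contradiction. The continuity issue you flag at the end is exactly what Lemma \ref{APEX23} and Proposition \ref{DFFE} are designed to settle, so no gap remains.
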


The rest of the paper is organized as follows.  In Section \ref{S:1}, we analyse the structure of the energy space $W^{s}(\mathbb{R}^{N})$. Moreover, we recall several known results, which will be needed later. In Section \ref{S:2}, we give an idea of the proof of  Proposition \ref{PCS}.  In Section \ref{S:3} we prove, by variational techniques, the existence of a minimizer for $d(\omega)$. The stability result is proved in Section  \ref{S:4}. In the Appendix we list some properties of the Orlicz space  associated with $W^{s}(\mathbb{R}^{N})$.

{\bf Notation.} The space $L^{2}(\mathbb{R}^{N},\mathbb{C})$  will be denoted  by $L^{2}(\mathbb{R}^{N})$ and its norm by $\|\cdot\|_{L^{2}}$.  
$\left\langle \cdot , \cdot \right\rangle$ is the duality pairing between $X^{\prime}$ and $X$, where $X$ is a Banach space and $X^{\prime}$ is its dual. Finally, $2^{\ast}_{s}:=2N/(N-2s)$ for $N\geq2$ and $0<s<1$. Throughout this paper, the letter C will denote positive
constants whose value may change from line to line.

\section{Preliminaries and functional setting}  
\label{S:1}
For the sake of self-containedness, we first provide some basic properties of the fractional Sobolev spaces $H^{s}(\mathbb{R}^{N})$, which will be needed later. Consider the fractional order Sobolev space
\begin{equation*}
H^{s}(\mathbb{R}^{N})=\left\{u\in L^{2}(\mathbb{R}^{N}): \int_{\mathbb{R}^{N}} (1+|\xi|^{2s})|\hat{u}(\xi)|^{2}d\xi<\infty\right\},
\end{equation*}
where $\hat{u}=\mathcal{F}(u)$. The norm is defined by $\|u\|^{2}_{H^{s}(\mathbb{R}^{N})}=\int_{\mathbb{R}^{N}} (|\hat{u}(\xi)|^{2}+|\xi|^{2s}|\hat{u}(\xi)|^{2})d\xi$.
Notice that, by Plancherel's theorem we have  
\begin{align*}
\|u\|^{2}_{H^{s}(\mathbb{R}^{N})}&=\int_{\mathbb{R}^{N}} |\hat{u}(\xi)|^{2}d\xi+\int_{\mathbb{R}^{N}}|\xi|^{2s}|\hat{u}(\xi)|^{2}d\xi\\
&=\int_{\mathbb{R}^{N}} |{u}(x)|^{2}dx+\int_{\mathbb{R}^{N}}|\widehat{(-\Delta)^\frac{s}{2}u}(\xi)|^{2}d\xi\\
&=\int_{\mathbb{R}^{N}} |{u}(x)|^{2}dx+\int_{\mathbb{R}^{N}}|(-\Delta)^\frac{s}{2} u(x)|^{2}dx<\infty,
\end{align*}
for every $u\in H^{s}(\mathbb{R}^{N})$.  Moreover, the space $H^{s}(\mathbb{R}^{N})$ is continuously embedded into $L^{q}(\mathbb{R}^{N})$ for any $q\in[2,2^{*}_{s}]$ and compactly embedded into $L_{\text{loc}}^{q}(\mathbb{R}^{N})$ for any $q\in[2,2^{*}_{s})$, {where}  $2^{*}_{s}={2N}/{N-2s}$. See \cite{DNGPEV} for more details.

Let $0<s<1$ and $\Omega$ a smooth bounded domain of $\mathbb{R}^{N}$, we define $H^{s}(\Omega)$ as follows
\begin{equation*}
H^{s}(\Omega)=\left\{u\in L^{2}(\Omega): \frac{|u(x)-u(y)|}{|x-y|^{\frac{N}{2}+s}}\in L^{2}(\Omega\times \Omega)\right\},
\end{equation*}
endowed with the norm
\begin{equation}\label{NM2}
\|u\|^{2}_{H^{s}(\Omega)}=\int_{\Omega}|u|^{2}dx+\int_{\Omega}\int_{\Omega}\frac{|u(x)-u(y)|^{2}}{{|x-y|^{{N}+2s}}}dxdy.
\end{equation}
Also, we denote by $H_{0}^{s}(\Omega)$ the Hilbert space defined as the closure of $C^{\infty}_{0}(\Omega)$ under the norm $\|\cdot\|^{2}_{H^{s}(\Omega)}$ defined in \eqref{NM2}. The dual space $H^{-s}(\Omega)$ of $H_{0}^{s}(\Omega)$ is defined in the standard way. For a general reference on analytical properties of fractional Sobolev spaces, see \cite{DNGPEV}.

The next result is an adaptation of a classical lemma of Lions. For a proof we refer to \cite[Lemma 2.8]{SB1}.
\begin{lemma} \label{CLLQ} Let $N\geq2$ and $2^{\ast}_{s}=2N/(N-2s)$. If $\left\{u_{n}\right\}$ is bounded in $ H^{s}(\mathbb{R}^{N})$ and for some $R>0$  we have
\begin{equation*}
\lim_{n\rightarrow\infty}\sup_{y\in\mathbb{R}^{N}} \int _{B_{R}(y)}\left|u_{n}(x)\right|^{2}dx=0,
\end{equation*}
then one has $u_{n}\rightarrow 0$ in $L^{r}(\mathbb{R}^{N})$ for any $2<r<2^{\ast}_{s}$.
\end{lemma}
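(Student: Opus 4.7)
The plan is to adapt the classical Lions vanishing lemma to the fractional setting by combining a covering of $\mathbb{R}^N$ with a local interpolation/Sobolev estimate and then exploiting the hypothesis on the uniform local $L^2$ smallness. First I would cover $\mathbb{R}^N$ by a countable family of balls $B_i := B_R(y_i)$ with uniformly bounded overlap multiplicity $M = M(N)$ (for instance, $y_i$ ranging over a scaled integer lattice). For $r \in (2, 2^*_s)$, Hölder interpolation determines $\theta = \theta(r) \in (0,1)$ by $\frac{1}{r} = \frac{1-\theta}{2} + \frac{\theta}{2^*_s}$ and gives
\begin{equation*}
\|u\|_{L^r(B_i)}^r \leq \|u\|_{L^2(B_i)}^{r(1-\theta)}\,\|u\|_{L^{2^*_s}(B_i)}^{r\theta},
\end{equation*}
which I would combine with a local fractional Sobolev inequality $\|u\|_{L^{2^*_s}(B_i)} \leq C\,\|u\|_{H^s(B_i^{\ast})}$ on an enlarged ball $B_i^{\ast} = B_{2R}(y_i)$, obtained either via an extension operator for fractional Sobolev spaces on domains or via a smooth cutoff.

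Summing over $i$ with bounded overlap, $\int_{\mathbb{R}^N}|u|^r \leq \sum_i \|u\|_{L^r(B_i)}^r$, and denoting $\varepsilon_n := \sup_{y \in \mathbb{R}^N}\|u_n\|_{L^2(B_R(y))}^2 \to 0$, the $L^2$ factor can be pulled out as $\varepsilon_n^{r(1-\theta)/2}$. Provided $r \geq 2 + \frac{4s}{N}$, one checks that $r\theta \geq 2$, so the elementary inequality $\sum_i a_i^p \leq (\sum_i a_i)^p$ for $p\geq 1$ yields
\begin{equation*}
\sum_i \|u\|_{H^s(B_i^{\ast})}^{r\theta} \leq \Bigl(\sum_i \|u\|_{H^s(B_i^{\ast})}^2\Bigr)^{r\theta/2} \leq \bigl(CM\,\|u\|_{H^s(\mathbb{R}^N)}^2\bigr)^{r\theta/2},
\end{equation*}
which is bounded. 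Combining these estimates gives
\begin{equation*}
\|u_n\|_{L^r(\mathbb{R}^N)}^r \leq C\,\varepsilon_n^{r(1-\theta)/2}\,\|u_n\|_{H^s(\mathbb{R}^N)}^{r\theta} \longrightarrow 0.
\end{equation*}

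For the remaining range $r \in (2, 2 + \tfrac{4s}{N})$ I would argue by pure interpolation: pick any $r_0 \in [2 + \tfrac{4s}{N}, 2^*_s)$ with $r_0 > r$ and use $\|u_n\|_{L^r} \leq \|u_n\|_{L^2}^{1-\lambda}\|u_n\|_{L^{r_0}}^{\lambda}$; the first factor is bounded by the $H^s$-boundedness and continuous embedding, while the second tends to zero by the step above. The main obstacle is the nonlocality of the fractional Sobolev norm: both the local Sobolev embedding $\|u\|_{L^{2^*_s}(B_i)} \lesssim \|u\|_{H^s(B_i^{\ast})}$ and the summability of the local $H^s$ contributions against the global norm require careful handling of long-range interactions of $(-\Delta)^{s/2}$, and this is the point where the argument genuinely departs from the classical $s = 1$ proof.
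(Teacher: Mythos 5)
Your proof is correct, but note that the paper contains no proof of this lemma at all: it defers entirely to \cite[Lemma 2.8]{SB1}, and the argument there is precisely the Lions--Willem covering/interpolation scheme you reconstruct (interpolate on balls between $L^{2}$ and $L^{2^{\ast}_{s}}$, apply a local fractional Sobolev inequality, sum over a cover with bounded overlap, then reach the low exponents $2<r<2+4s/N$ by global interpolation against the bounded $L^{2}$ norm), so your route coincides with the one the paper implicitly relies on. Your exponent bookkeeping checks out: with $1/r=(1-\theta)/2+\theta/2^{\ast}_{s}$ one computes $r\theta=N(r-2)/(2s)$, so $r\theta\geq 2$ exactly when $r\geq 2+4s/N$, and $2+4s/N<2^{\ast}_{s}$ because $N>2s$; the step $\sum_i a_i^{p}\leq(\sum_i a_i)^{p}$ for $p=r\theta/2\geq1$ is also valid. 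The one point you leave as an announced ``main obstacle'' --- nonlocality in the summation $\sum_i\|u\|^{2}_{H^{s}(B_i^{\ast})}\leq CM\|u\|^{2}_{H^{s}(\mathbb{R}^{N})}$ and in the local embedding --- in fact dissolves if the localized norms are taken to be the Gagliardo norms \eqref{NM2}: the double integral over $B_i^{\ast}\times B_i^{\ast}$ is monotone in the domain, each pair $(x,y)$ lies in $B_i^{\ast}\times B_i^{\ast}$ for at most $M$ indices $i$, and the full double integral over $\mathbb{R}^{N}\times\mathbb{R}^{N}$ is comparable (up to a constant $C(N,s)$) to $\|(-\Delta)^{s/2}u\|^{2}_{L^{2}}$; likewise the local embedding $\|u\|_{L^{2^{\ast}_{s}}(B_R(y))}\leq C\|u\|_{H^{s}(B_R(y))}$ holds with a constant independent of the center $y$ by translation invariance, since balls are Lipschitz extension domains --- so no enlarged ball, cutoff, or genuinely long-range estimate is needed anywhere. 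With that observation made explicit, your argument is a complete and self-contained proof of the lemma the paper only cites.
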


Now we need to introduce some notation. Define 
\begin{equation*}
F(z)=\left|z\right|^{2}\mbox{Log}\left|z\right|^{2}\quad  \text{for every  $z\in\mathbb{C}$},
\end{equation*}
and as in \cite{CL},  we define the functions  $A$, $B$ on $\left[0, \infty\right)$  by 
\begin{equation}\label{IFD}
A(s)=
\begin{cases}
-s^{2}\,\mbox{Log}(s^{2}), &\text{if $0\leq s\leq e^{-3}$;}\\
3s^{2}+4e^{-3}s^{}-e^{-6}, &\text{if $ s\geq e^{-3}$;}
\end{cases}
\quad  B(s)=F(s)+A(s).
\end{equation}
Furthermore, let  $a$, $b$ be functions defined by
\begin{equation}\label{abapex}
a(z)=\frac{z}{|z|^{2}}\,A(\left|z\right|)\,\, \text{ and  }\,\, b(z)=\frac{z}{|z|^{2}}\,B(\left|z\right|) \text{  for $z\in \mathbb{C}$, $z\neq 0$}.
\end{equation}
Notice that we have $b(z)-a(z)=z\,\mathrm{Log}\left|z\right|^{2}$. It follows that $A$ is a nonnegative  convex and increasing function, and $A\in C^{1}\left([0,+\infty)\right)\cap C^{2}\left((0,+\infty)\right)$. The Orlicz space $L^{A}(\mathbb{R}^{N})$ corresponding to $A$ is defined by
\begin{equation*}
L^{A}(\mathbb{R}^{N})=\left\{u\in L^{1}_{\text{loc}}(\mathbb{R}^{N}) : A(\left|u\right|)\in L^{1}_{}(\mathbb{R}^{N})\right\}, 
\end{equation*}
equipped with the Luxemburg norm 
\begin{equation*}
\left\|u\right\|_{L^{A}}={\inf}\left\{k>0: \int_{\mathbb{R}^{N}}A\left(k^{-1}{\left|u(x)\right|}\right)dx\leq 1 \right\}.
\end{equation*}
Here as usual $L^{1}_{\text{loc}}(\mathbb{R}^{N})$ is the space of all locally Lebesgue integrable functions. It is proved in \cite[Lemma 2.1]{CL} that $A$ is a Young-function which is $\Delta_{2}$-regular and $\left(L^{A}(\mathbb{R}^{N}),\|\cdot\|_{L^{A}} \right)$ is a separable reflexive  Banach space.

We consider the reflexive Banach space $W^{s}({\mathbb{R}}^{N})=H^{s}(\mathbb{R}^{N})\cap L^{A}(\mathbb{R}^{N})$ equipped with the usual norm $\left\|u\right\|_{W^{s}(\mathbb{R}^{N})}=\left\|u\right\|_{H^{s}(\mathbb{R}^{N})}+\left\|u\right\|_{L^{A}}$. The following lemma provides an alternative way of defining the energy space $W^{s}({\mathbb{R}}^{N})$.
\begin{lemma}\label{L11}
Let $0<s<1$ and $N\geq2$. Then 
\begin{equation*}
W^{s}(\mathbb{R}^{N})=\left\{u\in H^{s}(\mathbb{R}^{N}):\left|u\right|^{2}\mathrm{Log}\left|u\right|^{2}\in L^{1}(\mathbb{R}^{N})\right\}
\end{equation*}
\end{lemma}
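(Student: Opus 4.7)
The plan is to use the identification $W^{s}(\mathbb{R}^N)=H^{s}(\mathbb{R}^N)\cap L^{A}(\mathbb{R}^N)$ and show that, for $u\in H^{s}(\mathbb{R}^N)$, the condition $A(|u|)\in L^{1}(\mathbb{R}^N)$ is equivalent to $|u|^{2}\mathrm{Log}|u|^{2}\in L^{1}(\mathbb{R}^N)$. I would split $\mathbb{R}^N$ according to the two branches of the piecewise definition of $A$ in \eqref{IFD}, setting
\[
\Omega_{1}=\{x\in\mathbb{R}^N:|u(x)|\le e^{-3}\},\qquad \Omega_{2}=\{x\in\mathbb{R}^N:|u(x)|>e^{-3}\},
\]
and analyse each region separately.

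On $\Omega_{1}$ the definition gives $A(|u|)=-|u|^{2}\mathrm{Log}|u|^{2}$ pointwise, and since $|u|\le e^{-3}<1$ there one has $|u|^{2}\mathrm{Log}|u|^{2}\le 0$, hence $A(|u|)=\bigl||u|^{2}\mathrm{Log}|u|^{2}\bigr|$ on $\Omega_{1}$. Consequently $A(|u|)\chi_{\Omega_{1}}\in L^{1}$ if and only if $|u|^{2}\mathrm{Log}|u|^{2}\,\chi_{\Omega_{1}}\in L^{1}$; this is the region that captures the singularity of the logarithm at the origin, and the very purpose of introducing $A$ is to turn this singularity into a nonnegative Young function.

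On $\Omega_{2}$ I would show that $u\in H^{s}(\mathbb{R}^N)$ alone already forces both $A(|u|)\chi_{\Omega_{2}}$ and $|u|^{2}\mathrm{Log}|u|^{2}\,\chi_{\Omega_{2}}$ to lie in $L^{1}$, so this region imposes no additional restriction. For $A$, the formula $A(|u|)=3|u|^{2}+4e^{-3}|u|-e^{-6}$ on $\Omega_{2}$, combined with Chebyshev's inequality $|\Omega_{2}|\le e^{6}\|u\|_{L^{2}}^{2}<\infty$ and $u\in L^{2}(\mathbb{R}^N)$, immediately yields $A(|u|)\chi_{\Omega_{2}}\in L^{1}$. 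For $|u|^{2}\mathrm{Log}|u|^{2}$, I would split $\Omega_{2}$ further into the bounded piece $\{e^{-3}<|u|\le 1\}$, on which $\bigl||u|^{2}\mathrm{Log}|u|^{2}\bigr|\le 6|u|^{2}$, and the unbounded piece $\{|u|>1\}$, on which $\mathrm{Log}|u|^{2}\le C_{\epsilon}|u|^{2\epsilon}$ for any $\epsilon>0$. Choosing $\epsilon=(2^{\ast}_{s}-2)/2>0$, which is admissible because $N\ge 2$ and $0<s<1$ ensure $2^{\ast}_{s}>2$, the Sobolev embedding $H^{s}\hookrightarrow L^{2}\cap L^{2^{\ast}_{s}}$ recalled in Section~\ref{S:1} gives $|u|^{2}\mathrm{Log}|u|^{2}\,\chi_{\Omega_{2}}\in L^{1}$.

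Adding the contributions from $\Omega_{1}$ and $\Omega_{2}$ yields the claimed set equality. I do not anticipate any substantive obstacle; the only delicate ingredient is the polynomial-versus-logarithmic comparison on $\{|u|>1\}$, and it is precisely the fractional Sobolev embedding that justifies it and explains the hypothesis $N\ge 2$ appearing in the statement.
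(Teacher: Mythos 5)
Your proof is correct, but it takes a genuinely different route from the paper. The paper works with the auxiliary function $B=F+A$ from \eqref{IFD} and proves the quantitative Lipschitz-type estimate \eqref{DB}, namely $\int_{\mathbb{R}^{N}}|B(|u|)-B(|v|)|\,dx\leq C(\|u\|_{H^{s}}+\|v\|_{H^{s}})^{\gamma}\|u-v\|_{L^{2}}$, which shows that $B(|u|)\in L^{1}(\mathbb{R}^{N})$ for \emph{every} $u\in H^{s}(\mathbb{R}^{N})$; since $|u|^{2}\mathrm{Log}|u|^{2}=B(|u|)-A(|u|)$ pointwise, the integrability of $|u|^{2}\mathrm{Log}|u|^{2}$ is then equivalent to that of $A(|u|)$, i.e.\ to $u\in L^{A}(\mathbb{R}^{N})$. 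You instead decompose the physical domain into $\{|u|\leq e^{-3}\}$ and $\{|u|>e^{-3}\}$, observe that $A(|u|)=\bigl||u|^{2}\mathrm{Log}|u|^{2}\bigr|$ on the first region, and use Chebyshev's inequality plus the embedding $H^{s}\hookrightarrow L^{2}\cap L^{2^{\ast}_{s}}$ to show both quantities are automatically integrable on the second. The two arguments rest on the same underlying fact (the logarithmic growth at infinity is dominated by the Sobolev exponent, and $A$ is exactly $|F|$ near the origin), but yours is more elementary and self-contained, while the paper's has the advantage that the estimate \eqref{DB} it establishes is reused repeatedly later (in Proposition \ref{DFFE}, in Lemma \ref{L5}, and in the proof of Theorem \ref{ESSW}), so proving the lemma via \eqref{DB} costs nothing extra. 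Your approach would require \eqref{DB} to be proved separately anyway. One cosmetic remark: on $\{e^{-3}<|u|\leq 1\}$ you have $\bigl|\mathrm{Log}|u|^{2}\bigr|\leq 6$, so the bound $6|u|^{2}$ is exactly right; everything checks out.
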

\begin{proof} One easily verifies that  for every  $\epsilon>0$, there exist $C_{\epsilon}>0$ such that $|B(z)-B(z_{1})|\leq C_{\epsilon}(|z|^{1+\epsilon}+|z_{1}|^{1+\epsilon})|z-z_{1}|$ for all $z$, $z_{1}\in \mathbb{C}$.  Integrating this inequality on $\mathbb{R}^{N}$ with $\epsilon=(2^{*}_{s}-2)/2$ and applying Hölder inequality and Sobolev embeddings give
\begin{equation}\label{DB}
\int_{\mathbb{R}^{N}}\left|B(\left|u\right|)- B(\left|v\right|)\right|dx\leq C\left(\left\|u\right\|^{}_{H^{s}(\mathbb{R}^{N})}+ \left\|v\right\|^{}_{H^{s}(\mathbb{R}^{N})} \right)^{\gamma}\left\|u-v\right\|_{{L^{2}}},
\end{equation}
with $\gamma={2^{*}_{s}}/{2}$. Thus for $u\in H^{s}(\mathbb{R}^{N})$ we get  $B(|u|)\in L^{1}(\mathbb{R}^{N})$. Lemma \ref{L11}  follows then from the definition of the spaces  $W^{s}(\mathbb{R}^{N})$ and $L^{A}(\mathbb{R}^{N})$.
\end{proof} 

The following lemma is a variant of the Br\'ezis-Lieb lemma from \cite{LBL}.

\begin{lemma} \label{L4}
Let  $\left\{u_{n}\right\}$ be a bounded sequence in $W^{s}(\mathbb{R}^{N})$ such that $u_{n}\rightarrow u$ a.e. in $\mathbb{R}^{N}$. Then $u\in W^{s}(\mathbb{R}^{N})$ and 
\begin{equation*}
\lim_{n\rightarrow \infty}\int_{\mathbb{R}^{N}}\left\{\left|u_{n}\right|^{2}\mathrm{Log}\left|u_{n}\right|^{2}-\left|u_{n}-u\right|^{2}\mathrm{Log}\left|u_{n}-u\right|^{2}\right\}dx=\int_{\mathbb{R}^{N}}\left|u\right|^{2}\mathrm{Log}\left|u\right|^{2}dx.
\end{equation*}
\end{lemma}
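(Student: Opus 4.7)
The plan is to exploit the decomposition $F(s)=B(s)-A(s)$ from \eqref{IFD}, since $F(s)=s^2\log s^2$ changes sign near the origin and so does not fit the hypotheses of the classical Brézis-Lieb lemma \cite{LBL} directly. Both $A$ and $B$ are nonnegative, continuous, and $\Delta_2$-regular: $A$ captures the logarithmic singularity of $-F$ near zero and grows quadratically at infinity, while $B$ vanishes near zero and grows like $s^2\log s^2$ at infinity.

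First, I would establish $u\in W^s(\mathbb{R}^N)$: boundedness of $\{u_n\}$ in $H^s$ together with a.e.\ convergence gives $u\in H^s$, and Fatou's lemma applied to $A(|u_n|)\to A(|u|)$ (using $\sup_n\int A(|u_n|)<\infty$, which follows from $W^s$-boundedness) gives $A(|u|)\in L^1$, so $u\in L^A$. Then I would verify the pointwise inequality needed to apply the abstract Brézis-Lieb lemma separately for $j\in\{A,B\}$:
\begin{equation*}
|j(|z+w|)-j(|z|)|\leq \epsilon\, j(|z|)+C_\epsilon\, j(|w|),\qquad z,w\in\mathbb{C},\ \epsilon>0.
\end{equation*}
For a nonnegative, convex, $\Delta_2$-Young function $j$, this is standard: the regime $|w|\leq\delta|z|$ is handled by the mean value theorem together with the bound $sj'(s)\leq Cj(s)$, while $|w|>\delta|z|$ is handled by $j(|z+w|)+j(|z|)\leq C(\delta)j(|w|)$, with $\delta=\delta(\epsilon)$ chosen small.

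With this inequality in hand, the abstract Brézis-Lieb lemma applies to each of $A$ and $B$ (using for $B$ the estimate $B(s)\leq C(s^2+s^{2+\epsilon_0})$ and $H^s\hookrightarrow L^{2+\epsilon_0}$ for small $\epsilon_0>0$, which ensures $\sup_n\int B(|u_n|)<\infty$), yielding
\begin{equation*}
\lim_{n\to\infty}\int_{\mathbb{R}^N}\bigl[j(|u_n|)-j(|u_n-u|)\bigr]\,dx=\int_{\mathbb{R}^N}j(|u|)\,dx\qquad\text{for }j=A,B.
\end{equation*}
Subtracting the $A$ identity from the $B$ identity yields the statement, since $F=B-A$.

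The main obstacle is the verification of the pointwise inequality for $A$: although $A$ is convex and polynomial on $\{s\geq e^{-3}\}$, near zero $A(s)\sim -s^2\log s^2$ is not comparable to any pure power, so one must check carefully that $sA'(s)\leq CA(s)$ uniformly across the matching point $s=e^{-3}$ in order to justify the mean-value estimate on the $|w|\leq\delta|z|$ regime. The analogous estimate for $B$ at infinity is milder, since the super-quadratic factor $\log s^2$ is dominated by any small power, as already exploited in \eqref{DB}.
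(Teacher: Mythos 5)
Your overall strategy --- split $F=B-A$ as in \eqref{IFD}, apply the generalized Br\'ezis--Lieb lemma of \cite{LBL} to each piece, and subtract --- is exactly the intended one; the paper itself gives no argument and simply defers to \cite[Lemma 2.3]{AHA1}, where this is how it is done. The preliminary step ($u\in H^{s}$ by a.e.\ convergence and boundedness, $A(|u|)\in L^{1}$ by Fatou) is fine, and the whole $A$-half of your plan is sound: $A$ really is a nonnegative, increasing, convex, $\Delta_{2}$-regular Young function, the bound $sA'(s)\le 2A(s)$ holds uniformly across the matching point (one checks $sA'(s)=2A(s)-2s^{2}$ for $s\le e^{-3}$ and $6s^{2}+4e^{-3}s\le 2(3s^{2}+4e^{-3}s-e^{-6})$ for $s\ge e^{-3}$), and $\sup_{n}\int A(|u_{n}-u|)\,dx<\infty$ follows from convexity and $\Delta_{2}$.

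The gap is in the $B$-half. The pointwise inequality $|B(|z+w|)-B(|z|)|\le\epsilon\,B(|z|)+C_{\epsilon}\,B(|w|)$ is false, because $B$ is not a Young function of the kind your ``standard'' verification requires: by construction $B\equiv 0$ on the entire interval $[0,e^{-3}]$. Taking $z=w$ real and positive with $|z|=0.9\,e^{-3}$ gives $B(|z|)=B(|w|)=0$ while $B(|z+w|)=B(1.8\,e^{-3})>0$, so the right-hand side vanishes and the left-hand side does not. Indeed both regimes of your argument break down for $j=B$: the bound $sB'(s)\le C\,B(s)$ fails as $s\downarrow e^{-3}$ (there $B(s)\sim c(s-e^{-3})^{3}$ while $B'(s)\sim c'(s-e^{-3})^{2}$), and the bound $B(|z|)\le C(\delta)B(|w|)$ for $|w|>\delta|z|$ fails whenever $|w|\le e^{-3}<|z|$. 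The repair is standard and you already have the ingredients: the generalized Br\'ezis--Lieb lemma does not require the majorants to be $j$ itself, only that $|j(a+b)-j(a)|\le\epsilon\varphi_{\epsilon}(a)+\psi_{\epsilon}(b)$ with $\sup_{n}\int\varphi_{\epsilon}(|u_{n}-u|)\,dx<\infty$ and $\int\psi_{\epsilon}(|u|)\,dx<\infty$. From the Lipschitz-type bound behind \eqref{DB}, namely $|B(a+b)-B(a)|\le C_{\epsilon_{0}}(|a|^{1+\epsilon_{0}}+|b|^{1+\epsilon_{0}})|b|$, Young's inequality gives $|B(a+b)-B(a)|\le\epsilon|a|^{2+\epsilon_{0}}+C_{\epsilon}|b|^{2+\epsilon_{0}}$, so one takes $\varphi_{\epsilon}(a)=|a|^{2+\epsilon_{0}}$ and $\psi_{\epsilon}(b)=|b|^{2+\epsilon_{0}}$ with $2<2+\epsilon_{0}<2^{*}_{s}$; the required uniform integrability then follows from the embedding $H^{s}(\mathbb{R}^{N})\hookrightarrow L^{2+\epsilon_{0}}(\mathbb{R}^{N})$. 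With that substitution your proof goes through.
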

\begin{proof}
The proof follows along the same lines as \cite[Lemma 2.3]{AHA1}. We omit the details.
\end{proof}

It follows from Proposition 1.1.3 in \cite{CB} that
\begin{equation*}
{W}^{-s}({\mathbb{R}^{N}})=H^{-s}(\mathbb{R}^{N})+L^{A^{\prime}}(\mathbb{R}^{N}),
\end{equation*}
where the Banach space ${W}^{-s}({\mathbb{R}^{N}})$ is equipped with its usual norm. Here, $L^{A^{\prime}}(\mathbb{R}^{N})$ is the dual space of $L^{A}(\mathbb{R}^{N})$ (see \cite{CL}). It is easy to see that one has the following chain of continuous embedding: $W^{s}(\mathbb{R}^{N})\hookrightarrow L^{2}(\mathbb{R}^{N})\hookrightarrow {W}^{-s}({\mathbb{R}^{N}})$.

Now we will show that the energy functional $E$ is of class $C^{1}$ on ${W}^{s}({\mathbb{R}^{N}})$. First we need the following lemma.

\begin{lemma} \label{APEX23}
The operator $L: u\rightarrow (-\Delta)^{s}u-u\,  \mathrm{Log}\left|u\right|^{2}$ is continuous from  $W^{s}(\mathbb{R}^{N})$  to $W^{-s}(\mathbb{R}^{N})$. The image under $L$ of a bounded subset of $W^{s}(\mathbb{R}^{N})$ is a bounded subset of $W^{-s}(\mathbb{R}^{N})$.
\end{lemma}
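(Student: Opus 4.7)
The plan is to split the nonlinear term using the pointwise identity $u\,\mathrm{Log}|u|^{2}=b(u)-a(u)$ from \eqref{abapex}, writing $Lu=(-\Delta)^{s}u+a(u)-b(u)$, and to handle the three summands separately using different topologies that all embed continuously into $W^{-s}(\mathbb{R}^{N})$.

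First, by Plancherel, $(-\Delta)^{s}:H^{s}(\mathbb{R}^{N})\to H^{-s}(\mathbb{R}^{N})$ is continuous with $\|(-\Delta)^{s}u\|_{H^{-s}}\leq \|u\|_{H^{s}}$; composing with the continuous embeddings $W^{s}\hookrightarrow H^{s}$ and $H^{-s}\hookrightarrow W^{-s}$ takes care of the linear piece. Second, for the Nemytskii operator $u\mapsto a(u)$, I would appeal to Orlicz theory: since $A$ is a $\Delta_{2}$-regular Young function (noted after \eqref{abapex} and in the Appendix) and $a(z)=zA(|z|)/|z|^{2}$ has the growth naturally dual to $A$, the map $a:L^{A}(\mathbb{R}^{N})\to L^{A'}(\mathbb{R}^{N})$ is continuous and bounded on bounded sets. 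Combined with the continuous embedding $L^{A'}(\mathbb{R}^{N})\hookrightarrow W^{-s}(\mathbb{R}^{N})$ coming from the decomposition $W^{-s}=H^{-s}+L^{A'}$ recorded in Section \ref{S:1}, this finishes the Orlicz part.

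For the remaining term $u\mapsto b(u)$, I would exploit that $B$ vanishes identically on $[0,e^{-3}]$ and grows at most like $s^{2}\log s^{2}$ at infinity. Consequently, for any $\epsilon>0$ there exists $C_{\epsilon}>0$ with the pointwise estimate
\begin{equation*}
|b(z)-b(z_{1})|\leq C_{\epsilon}\bigl(|z|^{\epsilon}+|z_{1}|^{\epsilon}\bigr)\,|z-z_{1}|,\qquad z,z_{1}\in\mathbb{C},
\end{equation*}
of the same flavor as the one underlying \eqref{DB}. Since $N\geq 2$ and $0<s<1$ give $2^{*}_{s}>2$, I can choose $\epsilon>0$ small enough that both factors on the right-hand side fall within the Sobolev range, and apply H\"{o}lder's inequality together with $H^{s}(\mathbb{R}^{N})\hookrightarrow L^{p}(\mathbb{R}^{N})$ for $p\in[2,2^{*}_{s}]$ to obtain
\begin{equation*}
\|b(u)-b(v)\|_{L^{(2^{*}_{s})'}}\leq C_{\epsilon}\bigl(\|u\|_{H^{s}}^{\epsilon}+\|v\|_{H^{s}}^{\epsilon}\bigr)\|u-v\|_{L^{r}}
\end{equation*}
for suitable $r\in[2,2^{*}_{s}]$. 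The dual Sobolev embedding $L^{(2^{*}_{s})'}\hookrightarrow H^{-s}\hookrightarrow W^{-s}$ then delivers the desired continuity and the boundedness on bounded sets.

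The main obstacle I anticipate is the pointwise inequality on $b(z)-b(z_{1})$: the factor $z/|z|^{2}$ appearing in \eqref{abapex} is singular at the origin, and only the vanishing of $B$ on $[0,e^{-3}]$ compensates for this singularity and yields a globally valid weighted Lipschitz bound. Once the pointwise inequality is in place, the rest is a routine combination of H\"{o}lder, Sobolev and Orlicz estimates together with the density/continuity of the associated Nemytskii operator, which standard truncation arguments handle.
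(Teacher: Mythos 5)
Your proposal is correct and follows essentially the same route as the paper: the decomposition $Lu=(-\Delta)^{s}u+a(u)-b(u)$, the Orlicz-space continuity of $u\mapsto a(u)$ from $L^{A}(\mathbb{R}^{N})$ to $L^{A^{\prime}}(\mathbb{R}^{N})$, and the weighted Lipschitz bound $|b(z)-b(z_{1})|\leq C_{\epsilon}(|z|^{\epsilon}+|z_{1}|^{\epsilon})|z-z_{1}|$ combined with H\"older and Sobolev embeddings are exactly the paper's ingredients. The only cosmetic difference is that you land the $b$-estimate in $L^{(2^{*}_{s})^{\prime}}(\mathbb{R}^{N})$ and pass through the dual Sobolev embedding, whereas the paper chooses $\epsilon=(2^{*}_{s}-2)/2$ so as to land directly in $L^{2}(\mathbb{R}^{N})\hookrightarrow W^{-s}(\mathbb{R}^{N})$; both variants are valid.
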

\begin{proof}
First, notice that $(-\Delta)^{s}$ is continuous from  $H^{s}(\mathbb{R}^{N})$ to $H^{-s}(\mathbb{R}^{N})$. Thus, using ${W}^{s}({\mathbb{R}}^{N})\hookrightarrow H^{s}(\mathbb{R}^{N})$, we obtain that the  operator $u\rightarrow (-\Delta)^{s}u$ is continuous from ${{W}^{s}}({\mathbb{R}^{N}})$ to ${W}^{-s}({\mathbb{R}^{N}})$. Secondly, for every  $\epsilon>0$, there exist $C_{\epsilon}>0$ such that $|b(z)-b(z_{1})|\leq C_{\epsilon}(|z|^{\epsilon}+|z_{1}|^{\epsilon})|z-z_{1}|$ for all $z$, $z_{1}\in \mathbb{C}$. Integrating this inequality on $\mathbb{R}^{N}$ with $\epsilon=(2^{*}_{s}-2)/2$ and applying Hölder inequality and Sobolev embeddings we obtain 
\begin{equation*}
\|b(u)-b(v)\|_{L^{2}(\mathbb{R}^{N})}\leq C \|u-v\|_{H^{s}(\mathbb{R}^{N})}\left(\|u\|_{H^{s}(\mathbb{R}^{N})}+\|u\|_{H^{s}(\mathbb{R}^{N})}\right)^{\gamma}
\end{equation*}
where $\gamma=2s/(N-2s)$. Then clearly $u\rightarrow b(u)$ is continuous and bounded from $H^{s}(\mathbb{R}^{N})$ to $L^{2}(\mathbb{R}^{N})$, then from ${{W}^{s}}({\mathbb{R}^{N}})$ to ${W}^{-s}({\mathbb{R}^{N}})$. Finally, since $u\rightarrow a(u)$ is continuous and bounded  from $L^{A}({\mathbb{R}}^{N})$ to $L^{A^{\prime}}({\mathbb{R}}^{N})$ (see  \cite[Lemma 2.6]{CL}), it follows that the operator $u\rightarrow a(u)-b(u)=-u\,\mathrm{Log}\left|u\right|^{2}$  is continuous and bounded from ${{W}^{s}}({\mathbb{R}^{N}})$ to ${W}^{-s}({\mathbb{R}^{N}})$, and lemma is proved.
\end{proof}
From Lemma \ref{APEX23},  we have the following consequence:

\begin{proposition} \label{DFFE}
The operator $E: W^{s}(\mathbb{R}^{N})\rightarrow \mathbb R$  is of class $C^{1}$ and  for $u\in W^{s}(\mathbb{R}^{N})$ the  Fr\'echet derivative of $E$ in $u$ exists and it is given by  
\begin{equation*}
E^{\prime}(u)=(-\Delta)^{s}u-u\, \mathrm{Log}\left|u\right|^{2}-u 
\end{equation*}
\end{proposition}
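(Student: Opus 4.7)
The plan is to exhibit $L(u) := (-\Delta)^{s}u - u\,\mathrm{Log}|u|^{2} - u$ as a continuous map from $W^{s}(\mathbb{R}^{N})$ to $W^{-s}(\mathbb{R}^{N})$, verify it coincides with the Gâteaux derivative of $E$, and finally invoke the classical fact that continuous Gâteaux-differentiability on a Banach space promotes the functional to class $C^{1}$.

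The continuity claim is essentially already in hand. Lemma \ref{APEX23} provides continuity of $u \mapsto (-\Delta)^{s}u - u\,\mathrm{Log}|u|^{2}$ from $W^{s}(\mathbb{R}^{N})$ to $W^{-s}(\mathbb{R}^{N})$, and the remaining linear term $u \mapsto u$ is continuous through the chain of embeddings $W^{s}(\mathbb{R}^{N}) \hookrightarrow L^{2}(\mathbb{R}^{N}) \hookrightarrow W^{-s}(\mathbb{R}^{N})$ recorded in Section \ref{S:1}. Summing, $L$ is continuous.

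For the Gâteaux computation I would fix $u,v \in W^{s}(\mathbb{R}^{N})$ and analyze $t^{-1}[E(u+tv)-E(u)]$ as $t \to 0$. The quadratic part contributes $\mathrm{Re}\langle(-\Delta)^{s}u, v\rangle$ by polarization. For the nonlinear part, use the pointwise identity
\begin{equation*}
\frac{d}{dt}\Big(|z+tw|^{2}\mathrm{Log}|z+tw|^{2}\Big)\bigg|_{t=0} = 2\,\mathrm{Re}(\bar z w)\bigl(\mathrm{Log}|z|^{2}+1\bigr),
\end{equation*}
and pass to the limit under the integral via dominated convergence. To construct an integrable majorant for the difference quotient, I would split $F(z) := |z|^{2}\,\mathrm{Log}|z|^{2} = B(|z|) - A(|z|)$ using the auxiliary functions from \eqref{IFD}: the $B$-contribution is locally Lipschitz with polynomial growth, so the estimate derived in the proof of Lemma \ref{L11} together with the Sobolev embedding $H^{s}\hookrightarrow L^{2^{*}_{s}}$ supplies an $L^{1}$-bound, while the $A$-contribution isolates the logarithmic singularity near the origin and is controlled using $\Delta_{2}$-regularity of the Young function $A$ together with \cite[Lemma 2.6]{CL}, in the same spirit as the proof of Lemma \ref{APEX23}. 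Combining the pieces yields $\langle E'(u), v\rangle = \mathrm{Re}\int[(-\Delta)^{s}u - u\,\mathrm{Log}|u|^{2} - u]\bar v\,dx$, which identifies the Gâteaux derivative with $L(u)$.

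The main obstacle is constructing the majorant near the zero-set of $u$, where the pointwise derivative of $|z|^{2}\mathrm{Log}|z|^{2}$ is unbounded — this is precisely why the $A/B$ splitting is useful, as it separates the contributions controlled in the Orlicz space $L^{A}$ from those controlled by Sobolev embeddings. Once the Gâteaux derivative is identified and continuity of $L:W^{s}(\mathbb{R}^{N})\to W^{-s}(\mathbb{R}^{N})$ is established as above, the standard Gâteaux-to-Fréchet result finishes the proof, giving both the formula for $E'(u)$ and the $C^{1}$-regularity of $E$.
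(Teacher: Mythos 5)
Your proposal follows essentially the same route as the paper: identify the candidate derivative, compute the G\^ateaux derivative of $E$ using the $A$/$B$ splitting from \eqref{IFD} (the paper delegates this to \cite[Proposition 2.7]{CL}), and then upgrade to Fr\'echet differentiability and $C^{1}$-regularity via the continuity of $u\mapsto(-\Delta)^{s}u-u\,\mathrm{Log}|u|^{2}$ established in Lemma \ref{APEX23} together with the embedding $W^{s}(\mathbb{R}^{N})\hookrightarrow L^{2}(\mathbb{R}^{N})\hookrightarrow W^{-s}(\mathbb{R}^{N})$. The argument is correct and matches the paper's proof in structure and in the key lemmas invoked.
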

\begin{proof}
We first show that $E$  is continuous. Notice that 
\begin{equation}\label{CCC}
E(u)=\frac{1}{2}\int_{\mathbb{R}^{N}}|(-\Delta)^\frac{s}{2}u|^{2}dx+\frac{1}{2}\int_{\mathbb{R}^{N}}A(\left|u_{}\right|)dx-\frac{1}{2}\int_{\mathbb{R}^{N}}B(\left|u_{}\right|)dx.
\end{equation}
The first term in the right-hand side of \eqref{CCC} is continuous of $H^{s}(\mathbb{R}^{N})\rightarrow \mathbb R$, and it follows from 
Proposition \ref{orlicz}(i) in Appendix that the second term is continuous of $L^{A}(\mathbb{R}^{N})\rightarrow \mathbb{R}$. Moreover, by \eqref{DB} we get that the third term  in the right-hand side of \eqref{CCC} is continuous of $H^{s}(\mathbb{R}^{N})\rightarrow \mathbb R$. Therefore, $E\in C(W^{s}(\mathbb{R}^{N}),\mathbb{R})$. Now, direct calculations show that, for $u$, $v\in W^{s}(\mathbb{R}^{N})$, $t\in (-1,1)$ (see \cite[Proposition 2.7]{CL}),
\begin{equation*}
\lim_{t\rightarrow 0} \frac{E(u+tv)-E(u)}{t}=\bigl\langle (-\Delta)^{s}u-u\, \mbox{Log}\left|u\right|^{2}-u,v\bigl\rangle.
\end{equation*}
Thus, $E$ is G\^ateaux differentiable. Then, by Lemma \ref{APEX23} we see that $E$ is  Fr\'echet differentiable  and $E^{\prime}(u)=(-\Delta)^{s}u-u\, \mbox{Log}\left|u\right|^{2}-u$.
\end{proof}

\section{The Cauchy problem}
\label{S:2}
In this section we sketch the proof of the global well-posedness of the Cauchy Problem  for \eqref{0NL} in the energy space  ${W}^{s}(\mathbb{R}^{N})$. The proof of Proposition \ref{PCS} is an adaptation of the proof of \cite[Theorem 9.3.4]{CB}.  So, we will approximate the logarithmic nonlinearity by a smooth nonlinearity, and as a consequence we construct a sequence of global solutions of the regularized Cauchy problem in $C(\mathbb{R},H^{s}(\mathbb{R}^{N}))\cap C^{1}(\mathbb{R},H^{-s}(\mathbb{R}^{N}))$,  then we pass to the limit using standard compactness results, extract a subsequence which converges to the solution of the limiting equation \eqref{0NL}. Finally, by using special properties  of the logarithmic nonlinearity we establish  uniqueness of the global solution.

First we regularize the logarithmic nonlinearity near the origin.  For $z\in \mathbb{C}$ and $m\in \mathbb{N}$,  we define the functions $a_{m}$ and $b_{m}$ by 
\begin{equation*}
a_{m}(z)=
\begin{cases}
 a_{}(z), &\text{if $\left|z\right|\geq \frac{1}{m}$;}\\
m\,z\,a_{}(\frac{1}{m}) , &\text{if $\left|z\right|\leq \frac{1}{m}$;}
\end{cases}
\quad \text{and} \quad 
b_{m}(z)=
\begin{cases}
 b_{}(z) , &\text{if $\left|z\right|\leq {m}$;}\\
\frac{z}{m}\,b({m}) , &\text{if $\left|z\right|\geq {m}$,}
\end{cases}
\end{equation*}
where   $a$ and $b$ were defined in \eqref{abapex}. For any fixed $m\in \mathbb{N}$,  we define a family of regularized nonlinearities in the form  $g_{m}(z)=b_{m}(z)-a_{m}(z)$,  for every $z\in \mathbb{C}$.

In order to construct a solution of  \eqref{0NL}, we solve first, for $m\in \mathbb{N}$, the regularized Cauchy problem
\begin{equation}\label{AHAX}
i\partial_{t}u^{m}-(-\Delta)^{s} u^{m}+g_{m}(u^{m})=0.
\end{equation}
\begin{proposition} \label{APCS} Let $0<s<1$ and $N\geq2$. For any $u_{0}\in H^{s}(\mathbb{R}^{N})$, there is a unique  solution  $u^{m}\in C(\mathbb{R},H^{s}(\mathbb{R}^{N}))\cap C^{1}(\mathbb{R}, H^{-s}(\mathbb{R}^{N}))$ of \eqref{AHAX}  such that $u^{m}(0)=u_{0}$. Furthermore, the conservation of energy and charge holds; that is, 
\begin{equation*}
\mathcal{E}_{m}(u^{m}(t))=\mathcal{E}_{m}(u_{0})\quad  and \quad \left\|u^{m}(t)\right\|^{2}_{L^{2}}=\left\|u_{0}\right\|^{2}_{L^{2}}\quad  \text{for all $t\in \mathbb{R}$},
\end{equation*}
where 
\begin{equation*}
\mathcal{E}_{m}(u)=\frac{1}{2}\int_{\mathbb{R}^{N}}|(-\Delta)^\frac{s}{2} u|^{2}dx-\frac{1}{2}\int_{\mathbb{R}^{N}}G_{m}(u)dx, \quad G_{m}(z)=\int^{\left|z\right|}_{0}g_{m}(s)ds.
\end{equation*}
\end{proposition}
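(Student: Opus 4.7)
The plan is to exploit the fact that for each fixed $m$ the regularized nonlinearity $g_m$ is \emph{globally Lipschitz} on $\mathbb{C}$ with $g_m(0)=0$; this reduces the whole problem to standard Banach fixed point theory for the unitary group $\{e^{-it(-\Delta)^{s}}\}_{t\in\mathbb{R}}$, and the two conservation laws then follow from classical multiplier arguments because $g_m$ has the gauge invariant form $g_m(u)=u\,h_m(|u|)$ and derives from the real radial potential $G_m(|u|)$.

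First I would verify that both $a_m$ and $b_m$ are globally Lipschitz $\mathbb{C}\to\mathbb{C}$. Away from the origin $a(z)=zA(|z|)/|z|^{2}$ is smooth, and for $|z|\ge e^{-3}$ we have $A(|z|)=3|z|^{2}+4e^{-3}|z|-e^{-6}$, so $a$ is affine at infinity; the definition of $a_m$ replaces $a$ by the genuinely linear map $z\mapsto m\,a(1/m)\,z$ on $|z|\le 1/m$. Similarly, the truncation of $b_m$ at $|z|\ge m$ linearizes the mildly superlinear logarithmic growth of $b$, while the estimate near the origin reduces to the one for $a$. Hence $g_m=b_m-a_m$ has a finite Lipschitz constant $L_m$, and because $g_m(0)=0$ the induced Nemitski map $u\mapsto g_m(u)$ is Lipschitz from $L^{2}(\mathbb{R}^{N})$ to $L^{2}(\mathbb{R}^{N})$ and, using the Gagliardo seminorm representation of $H^{s}(\mathbb{R}^{N})$ together with the pointwise bound $|g_m(u(x))-g_m(u(y))|\le L_m|u(x)-u(y)|$, also from $H^{s}(\mathbb{R}^{N})$ to $H^{s}(\mathbb{R}^{N})$ with the same constant $L_m$.

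Next I would solve the Duhamel formulation
\begin{equation*}
u^{m}(t)=e^{-it(-\Delta)^{s}}u_{0}-i\int_{0}^{t}e^{-i(t-\tau)(-\Delta)^{s}}g_m(u^{m}(\tau))\,d\tau
\end{equation*}
by a contraction in $C([-T,T];H^{s}(\mathbb{R}^{N}))$. Since $e^{-it(-\Delta)^{s}}$ is unitary on $H^{s}$, the map is a contraction whenever $T<1/(2L_m)$, a bound independent of $\|u_{0}\|_{H^{s}}$, so iteration in time produces a unique global solution $u^{m}\in C(\mathbb{R};H^{s}(\mathbb{R}^{N}))$; reading off the equation gives $\partial_{t}u^{m}\in C(\mathbb{R};H^{-s}(\mathbb{R}^{N}))$. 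Conservation of the $L^{2}$ norm then follows from pairing the equation with $\overline{u^{m}(t)}$ in the $H^{-s}\!\times\! H^{s}$ duality, taking imaginary parts, and using the self-adjointness of $(-\Delta)^{s}$ together with $\mathrm{Im}(g_m(z)\bar z)=0$. For the energy, I would pair the equation with $\overline{\partial_{t}u^{m}(t)}$ and take real parts; the nonlinear contribution $\mathrm{Re}\langle g_m(u^{m}),\partial_{t}u^{m}\rangle$ equals $\tfrac{1}{2}\tfrac{d}{dt}\int G_m(|u^{m}|)\,dx$ because $G_m$ is a radial potential for $g_m$, so the two nonlinear terms cancel. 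Since $\partial_{t}u^{m}$ only lives in $H^{-s}$, the cleanest way to justify the identity is to approximate $u_{0}$ in a denser space (e.g.\ Schwartz class), derive it for the corresponding smooth solutions, and then pass to the limit via the continuous dependence given by Step 2.

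The main obstacle I anticipate is the fractional Nemitski step, since for $0<s<1$ the integer-order chain-rule arguments employed in \cite[Theorem 9.3.4]{CB} do not apply directly; this is precisely where one must invoke the inequality
\begin{equation*}
\int\!\!\!\int\frac{|g_m(u(x))-g_m(u(y))|^{2}}{|x-y|^{N+2s}}\,dx\,dy\le L_m^{2}\int\!\!\!\int\frac{|u(x)-u(y)|^{2}}{|x-y|^{N+2s}}\,dx\,dy
\end{equation*}
to transfer the pointwise Lipschitz bound on $g_m$ to the $H^{s}$ level. Once this is at hand, the rest is a routine adaptation of Cazenave's scheme to the fractional setting, made legitimate by the fact that $(-\Delta)^{s}$ is a self-adjoint generator of a strongly continuous unitary group on both $L^{2}(\mathbb{R}^{N})$ and $H^{s}(\mathbb{R}^{N})$.
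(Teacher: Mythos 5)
There is a genuine gap at the fractional Nemitskii step, and it is exactly the point your last paragraph identifies as the crux. The pointwise bound $|g_m(u(x))-g_m(u(y))|\le L_m|u(x)-u(y)|$ inserted into the Gagliardo seminorm gives only the \emph{boundedness} estimate $\|g_m(u)\|_{H^s}\le C L_m\|u\|_{H^s}$; it does not give the \emph{difference} estimate $\|g_m(u)-g_m(v)\|_{H^s}\le C L_m\|u-v\|_{H^s}$ that your contraction in $C([-T,T];H^s(\mathbb{R}^N))$ requires. For that you would need to dominate
\begin{equation*}
\bigl|\,[g_m(u(x))-g_m(v(x))]-[g_m(u(y))-g_m(v(y))]\,\bigr|
\end{equation*}
by $L_m\bigl|[u(x)-v(x)]-[u(y)-v(y)]\bigr|$, which does not follow from a mere Lipschitz bound on $g_m$; and $g_m$ is only Lipschitz, not $C^1$ (for large $m$ the map $a_m$ has a corner at $|z|=1/m$: the radial derivative of $-z\,\mathrm{Log}|z|^2$ there is $-2(\mathrm{Log}(1/m)+1)e^{i\theta}$, while that of the linear extension is $-2\,\mathrm{Log}(1/m)e^{i\theta}$). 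So the fixed point in $H^s$ does not close as stated. This is precisely why the paper does not run a contraction in $H^s$: it uses only that $g_m$ is Lipschitz from bounded sets of $H^s(\mathbb{R}^N)$ into $H^{-s}(\mathbb{R}^N)$ (which does follow from the $L^2\to L^2$ Lipschitz property and the embedding $L^2\hookrightarrow H^{-s}$), invokes the compactness existence result of \cite[Proposition 4.1]{CHHHHW} to produce a weak solution satisfying charge conservation and the energy \emph{inequality} $\mathcal{E}_m(u^m(t))\le\mathcal{E}_m(u_0)$, proves uniqueness by Gronwall in $L^2$ via Duhamel, and obtains globality from $\int_{\mathbb{R}^N} G_m(u)\,dx\le C\|u\|_{L^2}^2$.

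The second gap is your proof of energy conservation. Pairing the equation with $\overline{\partial_t u^m}$ is not a legitimate duality pairing, since both $(-\Delta)^s u^m$ and $\partial_t u^m$ live only in $H^{-s}$, and the proposed remedy --- regularize $u_0$ and work with smoother solutions --- is not obviously available: persistence of higher regularity (say $H^{2s}$) would require $g_m$ to map $H^{2s}$ into itself, which again fails for a nonlinearity that is only Lipschitz once $2s>1$. The paper sidesteps the computation entirely: fixing $t_0$, solving with datum $u^m(t_0)$ and using uniqueness together with the energy inequality for the translated solution yields the reverse inequality $\mathcal{E}_m(u_0)\le\mathcal{E}_m(u^m(t_0))$, whence equality. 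If you want to keep a fixed point flavour, the workable version is a contraction in $C([-T,T];L^2(\mathbb{R}^N))$ restricted to a ball that is bounded in $L^\infty([-T,T];H^s)$ (closed under $L^2$ convergence by weak lower semicontinuity and preserved by the Duhamel map thanks to your boundedness estimate plus Gronwall), followed by the same uniqueness-plus-time-translation argument for the conserved quantities.
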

\begin{proof}
Our proof is inspired by the results of \cite[Section 4]{CHHHHW}. First, since $g_{m}$ is globally Lipschitz continuous $\mathbb{C}\rightarrow \mathbb{C}$, one easily verifies that $\|g_{m}(u)-g_{m}(v)\|_{H^{-s}(\mathbb{R}^{N})}\leq C(K)\|u-v\|_{H^{s}(\mathbb{R}^{N})}$ provided that $\|u\|_{H^{s}(\mathbb{R}^{N})}+\|v\|_{H^{s}(\mathbb{R}^{N})}\leq K$. Then, from \cite[Proposition 4.1]{CHHHHW} we see that there exists  a weak solution $u^{m}$ of \eqref{AHAX}  such that 
\begin{align}
& u^{m}\in L^{\infty}((-T_{min},T_{max}), H^{s}(\mathbb{R}^{N}))\cap W^{1,\infty}((-T_{min},T_{max}), H^{-s}(\mathbb{R}^{N})) \nonumber  ,\\
& \mathcal{E}_{m}(u^{m}(t))\leq\mathcal{E}_{m}(u_{0})\quad  \text{and} \quad \left\|u^{m}(t)\right\|^{2}_{L^{2}}=\left\|u_{0}\right\|^{2}_{L^{2}}\label{LON}
\end{align}
for all $t\in (-T_{min},T_{max})$, where $(-T_{min},T_{max})$ is the maximal  existence time interval of $u^{m}$ for initial data $u_{0}$.

Secondly, we show that  there is uniqueness for the problem \eqref{AHAX}. In fact, let $I$ be an interval containing $0$ and let $\Phi$, $\Psi\in L^{\infty}(I, H^{s}(\mathbb{R}^{N}))\cap W^{1,\infty}(I,H^{-s}(\mathbb{R}^{N}))$ be two solutions of \eqref{AHAX}. It follows that 
\begin{equation*}
\Psi(t)-\Phi(t)=i\,\int^{t}_{0}U(t-s)\left(g_{m}(\Psi(s))-g_{m}(\Phi(s))\right)ds \quad \text{for all} \quad t\in I,
\end{equation*}
where $U(t)=e^{-it(-\Delta)^{s}}$. Since $g_{m}$ is  Lipschitz continuous  $L^2(\mathbb{R}^{N})\rightarrow L^2(\mathbb{R}^{N})$, there exists a constant  $C>0$ such that 
\begin{equation*}
\|\Psi(t)-\Phi(t)\|^{2}_{L^{2}}\leq C \int^{t}_{0}\|\Psi(s)-\Phi(s)\|^{2}_{L^{2}}ds,
\end{equation*}
and therefore the uniqueness  follows by Gronwall's Lemma. Furthermore, since $\int_{\mathbb{R}^{N}}G_{m}(u)dx\leq C\left\|u\right\|^{2}_{L^{2}}$, from \eqref{LON}  we get  $\|u^{m}(t)\|^{2}_{H^{s}(\mathbb{R}^{N})}\leq C\left\|u_{0}\right\|^{2}_{L^{2}}+4\mathcal{E}_{m}(u_{0})$ for all $t\in (-T_{min},T_{max})$.   The continuity argument implies that all solutions of \eqref{AHAX} are global and uniformly bounded in $H^{s}(\mathbb{R}^{N})$.

Finally, we prove that the weak solution $u^{m}$ of  \eqref{AHAX} satisfies the conservation of energy. Indeed, fix $t_{0}\in \mathbb{R}$. Let $\varphi_0=u^{m}(t_{0})$ and let $w$ be the solution of \eqref{AHAX} with $w(0)=\varphi_0$. By uniqueness, we see that $w(\cdot-t_{0})=u^{m}(\cdot)$ on $\mathbb{R}$. From \eqref{LON}, we deduce in particular that  
\begin{equation*}\mathcal{E}_{m}(u_{0})\leq \mathcal{E}_{m}(\varphi_0)=\mathcal{E}_{m}(u^{m}(t_{0}))
\end{equation*}
Therefore, we have that both $\left\|u^{m}(t)\right\|^{2}_{L^{2}}$ and $\mathcal{E}_{m}(u^{m}(t))$ are constant on $\mathbb{R}$. The inclusion $u^{m}\in C(\mathbb{R},H^{s}(\mathbb{R}^{N}))\cap C^{1}(\mathbb{R}, H^{-s}(\mathbb{R}^{N}))$ follows from conservation laws. This completes the proof of Proposition \ref{APCS}.
\end{proof}

For the proof of Proposition \ref{PCS}, we will use the following lemma.
\begin{lemma}\label{3ACS} Let $0<s<1$ and $N\geq2$. Given $k\in\mathbb{N}$, set $\Omega_{k}=\left\{x\in \mathbb{R}^{N}: |x|<k\right\}$.
Let $\left\{u^{{m}}\right\}_{m\in\mathbb{N}}$ be a bounded sequence in $L^{\infty}(\mathbb{R},H^{s}_{}(\mathbb{R}^{N}))$. If $(u^{m}|_{\Omega_{k}})_{m\in\mathbb{N}}$ is a  bounded sequence of  $W^{1,\infty}(\mathbb{R}, H^{-s}(\Omega_{k}))$ for $k\in\mathbb{N}$, then there exists a subsequence, which we still denote by $\left\{u^{{m}}\right\}_{m\in\mathbb{N}}$, and there exist  $u\in L^{\infty}(\mathbb{R},H^{s}(\mathbb{R}^{N}))$, such that the following properties hold:\\
{\rm (i)}$\left.u\right|_{\Omega_{k}}\in W^{1,\infty}(\mathbb{R},H^{-s}(\Omega_{k}))$ for every $k\in\mathbb{N}$.\\
{\rm (ii)}  $u^{m}(t)\rightharpoonup u^{}(t)$  in  $H^{s}(\mathbb{R}^{N})$  as $m\rightarrow \infty$ for every $t\in \mathbb{R}$.\\
{\rm (iii)}	For every $t\in\mathbb{R}$ there exists a subsequence  $m_{j}$ such that $u_{}^{m_{j}}(x,t)\rightarrow u_{}^{}(x,t)$   as $j\rightarrow \infty$, for a.e. $x\in \mathbb{R}^{N}$.\\
{\rm (iv)}	$u_{}^{m_{}}(x,t)\rightarrow u_{}^{}(x,t)$ as $m\rightarrow \infty$,  for a.e.  $(x,t)\in \mathbb{R}^{N}\times\mathbb{R}$. 
\end{lemma}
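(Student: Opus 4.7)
The plan is a standard compactness and diagonal-extraction procedure combining weak-$\ast$ compactness in reflexive spaces with the Arzel\`a--Ascoli theorem and the Rellich--Kondrachov-type compact embedding $H^s(\Omega_k)\hookrightarrow L^2(\Omega_k)$ on bounded domains. The key ingredient that links time regularity to space regularity is the uniform $W^{1,\infty}(\mathbb{R},H^{-s}(\Omega_k))$ bound, which will produce equicontinuity in a weak topology.

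First I would extract the limit $u$ and prove (i). Since $H^s(\mathbb{R}^N)$ is separable and reflexive, $L^\infty(\mathbb{R},H^s(\mathbb{R}^N))$ coincides with the dual of $L^1(\mathbb{R},H^{-s}(\mathbb{R}^N))$; the Banach--Alaoglu theorem then yields a subsequence with $u^m \rightharpoonup^{\ast} u$ in that space. A further diagonal extraction over $k\in\mathbb{N}$, using weak-$\ast$ compactness in $L^\infty(\mathbb{R},H^{-s}(\Omega_k))$ applied to $\partial_t u^m|_{\Omega_k}$, produces on a single subsequence a limit of the time derivative in each $L^\infty(\mathbb{R},H^{-s}(\Omega_k))$, which by uniqueness must coincide with $\partial_t(u|_{\Omega_k})$; this yields (i).

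The main obstacle---and the step that really uses the Lipschitz-in-time estimate---is (ii), which asks for weak convergence in $H^s(\mathbb{R}^N)$ at \emph{every} time, not merely almost every time. The strategy is to fix $\phi\in C_c^\infty(\mathbb{R}^N)$ with $\mathrm{supp}\,\phi \subset \Omega_k$ and note that $t\mapsto\langle u^m(t),\phi\rangle$ is Lipschitz on $\mathbb{R}$ with a constant independent of $m$, thanks to the $W^{1,\infty}(\mathbb{R},H^{-s}(\Omega_k))$ bound, and uniformly bounded thanks to the $L^\infty(\mathbb{R},H^s(\mathbb{R}^N))$ bound. The Arzel\`a--Ascoli theorem on each compact time interval and a diagonal extraction over a countable dense subset of $C_c^\infty(\mathbb{R}^N)$ produce, after passing to a further subsequence, pointwise-in-$t$ convergence of $\langle u^m(t),\phi\rangle$ for every such $\phi$. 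The uniform $H^s$-boundedness of $\{u^m(t)\}$ and density extend this to all $\phi\in H^{-s}(\mathbb{R}^N)$, yielding a weak-$H^s$ limit for every $t$; comparison with the weak-$\ast$ limit $u$ identifies these limits a.e., and equicontinuity promotes $u$ to a weakly continuous representative valid everywhere, which is (ii).

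Properties (iii) and (iv) would then follow from (ii) together with compactness. For (iii) I would fix $t$, note that $u^m(t)\rightharpoonup u(t)$ in $H^s(\mathbb{R}^N)$, invoke the compact embedding $H^s(\Omega_k)\hookrightarrow L^2(\Omega_k)$ (see \cite{DNGPEV}) to upgrade to strong $L^2(\Omega_k)$ convergence for each $k$, and diagonalise in $k$ to obtain a subsequence converging a.e.\ on $\mathbb{R}^N$. For (iv) I would apply the Aubin--Lions lemma on $(-T,T)\times\Omega_k$: boundedness of $u^m$ in $L^2((-T,T),H^s(\Omega_k))$ together with boundedness of $\partial_t u^m$ in $L^2((-T,T),H^{-s}(\Omega_k))$, combined with the compact embedding $H^s(\Omega_k)\hookrightarrow L^2(\Omega_k)$ and the continuous embedding $L^2(\Omega_k)\hookrightarrow H^{-s}(\Omega_k)$, yields strong convergence in $L^2((-T,T),L^2(\Omega_k))$. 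A final diagonal extraction over $T$ and $k$ together with the standard pointwise a.e.\ extraction from $L^2$ convergence completes (iv).
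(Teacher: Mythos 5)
Your argument is correct and is essentially the proof the paper invokes: the paper simply defers to Lemma 9.3.6 of Cazenave's book, whose proof is exactly this combination of weak-$\ast$ compactness in $L^{\infty}(\mathbb{R},H^{s})$, the uniform Lipschitz-in-time bound in $H^{-s}(\Omega_{k})$ plus Arzel\`a--Ascoli and diagonal extraction to get weak convergence at every $t$, and the compact embedding $H^{s}(\Omega_{k})\hookrightarrow L^{2}(\Omega_{k})$ (equivalently an Aubin--Lions step) for the a.e.\ convergences. Your use of Aubin--Lions for (iv) is a cosmetic variant of the same compactness mechanism, so there is nothing substantive to compare.
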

\begin{proof}
The proof follows a similar argument as in  Lemma 9.3.6 of \cite{CB} and we do not repeat here.
\end{proof}

\begin{proof}[ \bf {Proof of Proposition \ref{PCS}}] Our proof follows the ideas of Cazenave \cite[Theorem 9.3.4]{CB}. 
Applying Proposition \ref{APCS}, we see that  for every $m\in \mathbb{N}$ there exists a unique global solution $u^{m}\in C(\mathbb{R}, H^{s}(\mathbb{R}^{N}))\cap C^{1}(\mathbb{R},  H^{-s}(\mathbb{R}^{N}))$ of \eqref{AHAX}, which satisfies 
\begin{equation}\label{JKL}
\mathcal{E}_{m}(u^{m}(t))=\mathcal{E}_{m}(u_{0})\quad \mbox{and}\quad\left\|u^{m}(t)\right\|^{2}_{L^{2}}=\left\|u_{0}\right\|^{2}_{L^{2}} \quad \text{ for all $t\in \mathbb{R}$},
\end{equation}
where \begin{equation*}
\mathcal{E}_{m}(u)=\frac{1}{2}\int_{\mathbb{R}^{N}}|(-\Delta)^\frac{s}{2} u|^{2}dx +\frac{1}{2}\int_{\mathbb{R}^{N}}\Phi_{m}(\left|u_{}\right|)dx-\frac{1}{2}\int_{\mathbb{R}^{N}}\Psi_{m}(\left|u_{}\right|)dx,
\end{equation*}
and the functions $\Phi_{m}$ and $\Psi_{m}$  defined by
\begin{equation*}
\Phi_{m}(z)=\frac{1}{2}\int^{\left|z\right|}_{0}a_{m}(s)ds \quad \mbox{and} \quad \Psi_{m}(z)=\frac{1}{2}\int^{\left|z\right|}_{0}b_{m}(s)ds.
\end{equation*}
It follows from \eqref{JKL} that $u^{m}$ is bounded in $L^{\infty}(\mathbb{R}, L^{2}(\mathbb{R}^{N}))$. Moreover, we have that the sequence of approximating solutions $u^{m}$ is bounded in the space $L^{\infty}(\mathbb{R}, H^{s}(\mathbb{R}^{N}))$. It also follows from the NLS equation \eqref{AHAX} that the sequence $\left.\partial_{t}u^{m}\right|_{\Omega_{k}}$ is bounded in the space $L^{\infty}(\mathbb{R}, H^{-s}(\Omega_{k}))$, where $\Omega_{k}=\left\{x\in \mathbb{R}^{N}: |x|<k\right\}$. Such statements can be proved along the same lines as the Step 2 of Theorem 9.3.4 in \cite{CB}. Therefore, we have that  $\left\{u^{{m}}\right\}_{m\in\mathbb{N}}$ satisfies the assumptions of  Lemma \ref{3ACS}. Let $u$ be the limit of $u^{m}$.

Now we show that the limiting function $u\in L^{\infty}(\mathbb{R},H^{s}(\mathbb{R}^{N}))$ is a weak solution of the logarithmic NLS equation \eqref{0NL}. To do so, we first write  a weak formulation of the NLS equation \eqref{AHAX}. Indeed, for any test  function $\psi\in C^{\infty}_{0}({\mathbb{R}^{N}})$  and $\phi\in C^{\infty}_{0}({\mathbb{R}})$, we have
\begin{equation}\label{3DPL}
\int_{\mathbb{R}}\left[-\left\langle i\, u^{m}, \psi\right\rangle \phi^{\prime}(t)- \left\langle u^{m}, (-\Delta)^{s}\psi\right\rangle \phi^{}(t)\right]\,dt+\int_{\mathbb{R}}\int_{\mathbb{R}^{N}}g_{m}(u^{m})\psi\phi\,dx\,dt=0.
\end{equation}
Furthermore, since $g_{m}(z)\rightarrow z\, \mbox{Log}\left|z\right|^{2}$ pointwise in $z\in \mathbb{C}$ as $m\rightarrow+\infty$, we apply the properties (ii)-(iv) of Lemma \ref{3ACS} to the integral formulation \eqref{3DPL} and obtain the following integral equation (see proof of Step 3 of \cite[Theorem 9.3.4]{CB})
 \begin{equation}\label{ert}
\int_{\mathbb{R}}\left[-\left\langle i\, u^{}, \psi\right\rangle \phi^{\prime}(t)- \left\langle u^{}, (-\Delta)^{s}\psi\right\rangle \phi^{}(t)\right]\,dt+\int_{\mathbb{R}}\int_{\mathbb{R}^{N}}u_{}\, \mbox{Log}\left|u_{}\right|^{2}\psi\phi\,dx\,dt=0.
\end{equation}
In addition, $u(0)=u_{0}$ by property (ii) of Lemma \ref{3ACS}. Moreover,  it is easy to see that $u\in{L^{\infty}(\mathbb{R}, L^{A}(\mathbb{R}^{N}) )}$ . Therefore, by integral equation \eqref{ert},  $u\in{L^{\infty}(\mathbb{R}, W^{s}(\mathbb{R}^{N}))}$ is a weak solution of the logarithmic NLS equation \eqref{0NL}. In particular, from Lemma \ref{APEX23}, we deduce that $u\in W^{1,\infty}(\mathbb{R}, W^{-s}(\mathbb{R}^{N}))$.  Now we show uniqueness of the solution in the class $L^{\infty}(\mathbb{R}, W^{s}(\mathbb{R}^{N}))\cap W^{1,\infty}(\mathbb{R}, W^{-s}(\mathbb{R}^{N}))$. Indeed, let $u$ and $v$ be two solutions   of \eqref{0NL} in that class. On taking the difference of the two equations and taking the $W^{s}(\mathbb{R}^{N})-W^{-s}(\mathbb{R}^{N})$ duality product with $i(u-u)$, we see that
\begin{equation*}
\left\langle u_{t}-v_{t}, u-v\right\rangle=-\Im \int_{\mathbb{R}^{N}}\left(u\mathrm{Log}\left|u \right|^{2}-v\mathrm{Log}\left|v \right|^{2} \right)(\overline{u}-\overline{v})dx.
\end{equation*}
Thus, from \cite[Lemma 9.3.5]{CB} we obtain
\begin{equation*}
\left\|u(t)-v(t)\right\|^{2}_{L^{2}}\leq 8\int^{t}_{0}\left\|u(s)-v(s)\right\|^{2}_{L^{2}}ds.
\end{equation*}
Therefore, the uniqueness of a solution follows by Gronwall's Lemma.  Finally, the conservation of charge and energy, and the continuity of the solution $u\in C(\mathbb{R}, W^{s}(\mathbb{R}^{N}))\cap C^{1}(\mathbb{R}, W^{-s}(\mathbb{R}^{N}))$ in time $t$ follow from the arguments identical to the case of the classical logarithmic NLS equation (see proof of Step 4 of \cite[Theorem 9.3.4]{CB}). This finishes the proposition.
\end{proof}

\section{Variational Analysis}
\label{S:3}
The aim of this section is to prove Theorem \ref{ESSW}. First we recall the fractional logarithmic Sobolev inequality. For a proof we refer to \cite{ACNT}.
\begin{lemma} \label{L1}
Let $f$ be any function in $ H^{s}(\mathbb{R}^{N})$ and $\alpha>0$. Then
\begin{equation}\label{SII}
\int_{\mathbb{R}^{N}}\left|f(x)\right|^{2}\mathrm{Log}\left(\frac{\left|f(x)\right|^{2}}{\|f\|^{2}_{L^{2}}}\right)dx+\left(N+\frac{N}{s}\mathrm{Log}\,\alpha+\mathrm{Log}\frac{s\Gamma(\frac{N}{2})}{\Gamma(\frac{N}{2s})}\right)\|f\|^{2}_{L^{2}}\leq\frac{\alpha^{2}}{\pi^{s}}\|(-\Delta)^{\frac{s}{2}}f\|^{2}_{L^{2}}.
\end{equation}
\end{lemma}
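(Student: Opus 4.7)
My plan is to prove the inequality as a fractional analogue of Gross's logarithmic Sobolev inequality by a Beckner-type strategy: extract a log-type estimate from a family of sharp $L^p$ Sobolev inequalities through a Jensen argument and a limit $p\to 2^+$, then identify the sharp constant via Fourier-stable extremals. By density of $\mathcal{S}(\mathbb{R}^N)$ in $H^s(\mathbb{R}^N)$ and continuity, I may assume $f\in\mathcal{S}(\mathbb{R}^N)$; by homogeneity normalize $\|f\|_{L^2}=1$. The scaling $f_\lambda(x)=\lambda^{N/2}f(\lambda x)$ preserves $\|f\|_{L^2}$, shifts $\int|f|^2\log|f|^2\,dx$ by $N\log\lambda$, and multiplies $\|(-\Delta)^{s/2}f\|_{L^2}^2$ by $\lambda^{2s}$. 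Hence the whole family of inequalities parametrized by $\alpha>0$ in Lemma \ref{L1} is generated by this scaling from a single optimized inequality of the form
\begin{equation*}
\int_{\mathbb{R}^N}|f|^2\log|f|^2\,dx \leq \frac{N}{2s}\log\bigl(K(N,s)\|(-\Delta)^{s/2}f\|_{L^2}^2\bigr),
\end{equation*}
and it is enough to establish this with a sharp constant $K(N,s)$.

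To produce such a bound from $L^p$-type estimates, for $p>2$ close to $2$ I apply Jensen's inequality to the probability measure $|f|^2\,dx$ with the convex function $-\log$:
\begin{equation*}
\int_{\mathbb{R}^N}|f|^2\log|f|^2\,dx = \frac{2}{p-2}\int_{\mathbb{R}^N}|f|^2\log|f|^{p-2}\,dx \leq \frac{2}{p-2}\log\int_{\mathbb{R}^N}|f|^p\,dx.
\end{equation*}
I then couple this with the sharp fractional Gagliardo-Nirenberg-Sobolev inequality $\|f\|_{L^p}\leq C(N,s,p)\|(-\Delta)^{s/2}f\|_{L^2}^{\theta}$, with interpolation exponent $\theta=\theta(p)=N(p-2)/(2sp)\in(0,1)$, and take $p\to 2^+$. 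Since $2p\theta(p)/(p-2)\to N/s$ in that limit, a Taylor expansion in $(p-2)$ transfers the Jensen estimate to the optimized log-Sobolev bound above with a constant $K(N,s)$ inherited from the GNS constant $C(N,s,p)$.

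To identify $K(N,s)$ and match the stated $\alpha$-parametrized form, I test against the symmetric $2s$-stable density $u_0$ determined by $\widehat{u_0}(\xi)=c_0\,e^{-|\xi|^{2s}}$, which is the natural extremizer for the fractional inequality (ordinary Gaussians $e^{-\alpha|x|^2}$ would instead produce $\Gamma(s+N/2)$ terms, not the $\Gamma(N/(2s))$ of the statement). A polar-coordinate computation in Fourier space with the substitution $v=2|\xi|^{2s}$ reduces $\|(-\Delta)^{s/2}u_0\|_{L^2}^2$ to a Gamma integral and produces precisely the ratio $\Gamma(N/(2s))/\Gamma(N/2)$ as well as the factor $s$ appearing in the statement. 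Unscaling as in the first paragraph then restores the terms $\frac{N}{s}\log\alpha$ and $\alpha^2/\pi^s$, yielding the inequality of Lemma \ref{L1} in its full $\alpha$-parametrized form.

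The main obstacle is the sharpness step: proving that the stable-density profile is actually extremal for the fractional Sobolev / log-Sobolev inequality (not merely a test function saturating an explicit bound) requires rearrangement arguments of Frank-Lieb type, and the precise value of the $\Gamma$-ratio is sensitive to the $(2\pi)^{-N/2}$ Fourier convention fixed in \eqref{TRFou}; careful tracking of normalizations throughout the computation is therefore essential.
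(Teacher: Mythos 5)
The paper does not prove this lemma at all: it is quoted verbatim from the reference \cite{ACNT} (Cotsiolis--Tavoularis), so there is no internal argument to compare against. Judged on its own terms, your proposal gets the soft structure right (the scaling reduction showing the $\alpha$-family is equivalent to a single logarithmic-form inequality is correct, as is the Jensen step $\int|f|^2\mathrm{Log}|f|^2\,dx\le\frac{2}{p-2}\mathrm{Log}\|f\|_{L^p}^p$), and your instinct that the ratio $\Gamma(\frac{N}{2s})/(s\Gamma(\frac{N}{2}))$ is the radial integral $\pi^{-N/2}\int e^{-|\xi|^{2s}}d\xi$ is exactly right. But the engine of the proof has two genuine gaps. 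First, the step ``a Taylor expansion in $(p-2)$ transfers the Jensen estimate to the log-Sobolev bound with a constant inherited from $C(N,s,p)$'' cannot be executed: for $0<s<1$ the sharp fractional Gagliardo--Nirenberg constants are only characterized variationally through ground states of $(-\Delta)^sQ+Q=Q^{p-1}$ and have no closed form, so there is nothing explicit to expand; and if you substitute the known route (H\"older interpolation against the sharp Sobolev endpoint), the $p\to2^+$ limit produces Lieb's Sobolev constant, which involves $\Gamma(\frac{N\pm2s}{2})$ and $\Gamma(N)$ --- not the $\Gamma(\frac{N}{2s})$ of the statement. Second, ``identifying $K(N,s)$ by testing against the stable density'' is logically backwards: inserting a test function can only show a constant is not improvable, never that the inequality holds with it. Worse, for $s<1$ the stable profile is not an extremizer, and the computation you propose cannot even be completed: if $\widehat{u_0}\propto e^{-|\xi|^{2s}}$ then $\|(-\Delta)^{s/2}u_0\|_{L^2}^2$ is a Gamma integral but $\int|u_0|^2\mathrm{Log}|u_0|^2\,dx$ has no closed form (and vice versa if you put the stable profile on the $x$-side).

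The mechanism that actually produces the stated constant is different and worth recording. Normalize $\|f\|_{L^2}=1$ and apply Jensen (the Gibbs variational principle) to the probability measure $|\hat f(\xi)|^2d\xi$ with weight $e^{-\lambda|\xi|^{2s}}$:
\begin{equation*}
-\int_{\mathbb{R}^N}|\hat f|^2\mathrm{Log}|\hat f|^2\,d\xi\;\le\;\lambda\|(-\Delta)^{\frac{s}{2}}f\|_{L^2}^2+\mathrm{Log}\Bigl(\lambda^{-\frac{N}{2s}}\,\pi^{\frac{N}{2}}\,\tfrac{\Gamma(\frac{N}{2s})}{s\,\Gamma(\frac{N}{2})}\Bigr),
\end{equation*}
which is where the Gamma ratio enters (this is your polar-coordinate integral, but on the Fourier side and inside a Jensen step, not as an extremality computation). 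Then convert the Fourier-side entropy into the physical-side entropy via the sharp Hirschman--Beckner entropic uncertainty principle, which with the convention \eqref{TRFou} reads $-\int|f|^2\mathrm{Log}|f|^2\,dx-\int|\hat f|^2\mathrm{Log}|\hat f|^2\,d\xi\ge N(1+\mathrm{Log}\,\pi)$; this supplies the additive $N$ and the remaining powers of $\pi$. Setting $\lambda=\alpha^2/\pi^s$ yields \eqref{SII} verbatim. Note that equality in the Gibbs step forces $|\hat f|^2\propto e^{-\lambda|\xi|^{2s}}$ while equality in Hirschman--Beckner forces $f$ Gaussian; these are incompatible for $s\neq1$, so the ``sharpness via extremals'' program in your last paragraph cannot succeed. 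The deep input is Beckner's sharp Hausdorff--Young inequality (behind the entropic uncertainty principle), not a sharp Gagliardo--Nirenberg family.
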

\begin{lemma}\label{L2}
Let $\omega\in \mathbb{R}$. Then, the quantity $d(\omega)$ is positive and satisfies
\begin{equation}\label{EA}
d(\omega)\geq \frac{1}{2}\left(\frac{s\Gamma(\frac{N}{2})}{\Gamma(\frac{N}{2s})}\right){\pi}^{\frac{N}{2}}e^{\omega+N} .
\end{equation}
\end{lemma}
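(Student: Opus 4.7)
The plan is to exploit the second formulation $d(\omega)=\tfrac12\inf\{\|u\|_{L^2}^2: u\neq0,\ I_\omega(u)=0\}$, so that lower bounding $d(\omega)$ is reduced to lower bounding $\|u\|_{L^2}^2$ for every $u$ on the Nehari manifold. I would first record the identity
\[
S_\omega(u)-\tfrac12 I_\omega(u)=\tfrac12\|u\|_{L^2}^2,
\]
which justifies the equivalent form of $d(\omega)$ used in \eqref{MPE}, and which immediately tells us $d(\omega)\geq 0$. Strict positivity then follows as soon as we prove the quantitative lower bound \eqref{EA}.

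Fix $u\in W^{s}(\mathbb{R}^N)\setminus\{0\}$ with $I_\omega(u)=0$. Then
\[
\int_{\mathbb{R}^N}|u|^2\mathrm{Log}\,|u|^2\,dx=\|(-\Delta)^{s/2}u\|_{L^2}^2+\omega\|u\|_{L^2}^2.
\]
Next I would split the logarithm as $\mathrm{Log}|u|^2=\mathrm{Log}(|u|^2/\|u\|_{L^2}^2)+\mathrm{Log}\|u\|_{L^2}^2$ and apply the fractional logarithmic Sobolev inequality of Lemma~\ref{L1} with the choice $\alpha^2=\pi^{s}$, which is precisely the value that cancels the gradient term $\|(-\Delta)^{s/2}u\|_{L^2}^2$ against the right-hand side of \eqref{SII}. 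The inequality becomes, after rearrangement,
\[
\Bigl(\omega+N+\tfrac{N}{2}\mathrm{Log}\,\pi+\mathrm{Log}\,\tfrac{s\Gamma(N/2)}{\Gamma(N/(2s))}\Bigr)\|u\|_{L^2}^2\leq\|u\|_{L^2}^2\,\mathrm{Log}\,\|u\|_{L^2}^2.
\]

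Dividing by $\|u\|_{L^2}^2>0$ and exponentiating yields
\[
\|u\|_{L^2}^2\geq e^{\omega+N}\,\pi^{N/2}\,\frac{s\Gamma(N/2)}{\Gamma(N/(2s))},
\]
and the inequality \eqref{EA} follows after taking the infimum and multiplying by $\tfrac12$. The main (and only real) point is selecting $\alpha=\pi^{s/2}$ so that the coefficient $1-\alpha^2/\pi^s$ of $\|(-\Delta)^{s/2}u\|_{L^2}^2$ vanishes; without this optimal tuning one cannot eliminate the dependence on the fractional Dirichlet energy, which is a priori unbounded on the constraint set. No compactness is required, and the strict positivity of $d(\omega)$ drops out automatically.
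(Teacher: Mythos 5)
Your proof is correct and follows essentially the same route as the paper: apply the fractional logarithmic Sobolev inequality with $\alpha=\pi^{s/2}$ so that the coefficient of $\|(-\Delta)^{s/2}u\|_{L^2}^2$ cancels against the Nehari constraint $I_\omega(u)=0$, divide by $\|u\|_{L^2}^2$, exponentiate, and take the infimum. The only addition is your explicit verification of the identity $S_\omega(u)-\tfrac12 I_\omega(u)=\tfrac12\|u\|_{L^2}^2$ justifying the second formulation of $d(\omega)$, which the paper takes for granted from \eqref{MPE}.
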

\begin{proof}
Let $u\in W^{s}(\mathbb{R}^{N})  \setminus  \left\{0 \right\}$ be such that  $I_{\omega}(u)=0$. Using the fractional logarithmic Sobolev inequality with $\alpha=\pi^{\frac{s}{2}}$, we see that
\begin{equation*}
\left(\omega+N(1+\mbox{Log}(\sqrt{\pi}))+\mathrm{Log}\frac{s\Gamma(\frac{N}{2})}{\Gamma(\frac{N}{2s})}\right)\left\|u\right\|^{2}_{L^{2}}\leq \left(\mbox{Log}\left\|u\right\|^{2}_{L^{2}}\right)\left\|u\right\|^{2}_{L^{2}}.
\end{equation*}
Thus, by the definition of $d(\omega)$ given in \eqref{MPE}, we get \eqref{EA}.
\end{proof}

\begin{lemma} \label{L5}
Let  $\omega\in \mathbb{R}$ and $0<s<1$. If  $\left\{ u_{n}\right\}$  is a minimizing sequence of problem \eqref{MPE}, then there is a subsequence   $\left\{u_{j_{n}}\right\}$ and a sequence $\left\{y_{n}\right\}\subset \mathbb{R}^{N}$ such that 
\begin{equation*}
v_{n}(x):=u_{j_{n}}(x+y_{n})
\end{equation*}
converges weakly in $W^{s}(\mathbb{R}^{N})$ to a function $\varphi\neq0$. Moreover, $\left\{v_{n}\right\}$ converges to $\varphi$ a.e and in $L_{\text{loc}}^{q}(\mathbb{R}^{N})$ for every $q\in[2,2^{*}_{s})$.
\end{lemma}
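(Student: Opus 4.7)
\textbf{Boundedness in $W^s(\mathbb{R}^N)$.} First I establish that $\{u_n\}$ is bounded in $W^s(\mathbb{R}^N)$. The algebraic identity $2S_\omega(u)-I_\omega(u)=\|u\|_{L^2}^2$ together with $I_\omega(u_n)=0$ and $S_\omega(u_n)\to d(\omega)$ immediately yields $\|u_n\|_{L^2}^2\to 2d(\omega)\in(0,\infty)$ (positivity by Lemma~\ref{L2}). To bound $\|(-\Delta)^{s/2}u_n\|_{L^2}$, I substitute the Nehari identity $\int|u_n|^2\mathrm{Log}|u_n|^2\,dx=\|(-\Delta)^{s/2}u_n\|_{L^2}^2+\omega\|u_n\|_{L^2}^2$ into Lemma~\ref{L1} with $\alpha^2<\pi^s$; absorbing the resulting kinetic term produces an inequality of the form $(1-\alpha^2/\pi^s)\|(-\Delta)^{s/2}u_n\|_{L^2}^2\leq C\|u_n\|_{L^2}^2$. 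The Orlicz norm $\|u_n\|_{L^A}$ is then controlled by bounding $\int A(|u_n|)\,dx$, splitting at the threshold $|u|=e^{-3}$: on $\{|u_n|>e^{-3}\}$, $A$ is polynomial and controlled by $\|u_n\|_{L^2}^2$ via $|\{|u_n|>e^{-3}\}|\leq e^6\|u_n\|_{L^2}^2$, while on $\{|u_n|\leq e^{-3}\}$ one has $A(|u_n|)=-|u_n|^2\mathrm{Log}|u_n|^2$, whose integral is controlled by the already-bounded quantity $|\int|u_n|^2\mathrm{Log}|u_n|^2\,dx|$.

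\textbf{Ruling out vanishing.} Next I argue by Lions' concentration-compactness. Suppose for contradiction that $\sup_{y\in\mathbb{R}^N}\int_{B_R(y)}|u_n|^2\,dx\to 0$ for every $R>0$. By Lemma~\ref{CLLQ} this forces $u_n\to 0$ strongly in $L^r(\mathbb{R}^N)$ for every $r\in(2,2^*_s)$. Using $|t|^2\mathrm{Log}|t|^2\leq C_\varepsilon|t|^{2+\varepsilon}$ on $\{|t|\geq 1\}$ (with $2+\varepsilon<2^*_s$) and the non-positivity of $|t|^2\mathrm{Log}|t|^2$ on $\{|t|\leq 1\}$, one obtains $\limsup_n\int|u_n|^2\mathrm{Log}|u_n|^2\,dx\leq 0$. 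Inserting this into the Nehari identity yields $\omega\|u_n\|_{L^2}^2+\|(-\Delta)^{s/2}u_n\|_{L^2}^2\leq o(1)$, which for $\omega>0$ contradicts $\|u_n\|_{L^2}^2\to 2d(\omega)>0$ immediately. Extending the contradiction to $\omega\leq 0$ is the main obstacle of the proof: there I plan to combine the $L^A$-based lower bound $\int_{|u_n|\leq e^{-3}}|u_n|^2\mathrm{Log}|u_n|^2\,dx\geq -C$ with the sharp choice $\alpha^2=\pi^s$ in Lemma~\ref{L1} to show that a vanishing minimizing sequence would force asymptotic saturation of Lemma~\ref{L2}, and then invoke the rigidity (Gaussian equality case) of the fractional logarithmic Sobolev inequality to conclude that $|u_n|^2$ must concentrate, contradicting the vanishing assumption.

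\textbf{Extraction of the non-trivial limit.} Once vanishing is excluded, there exist $\delta>0$, $R>0$, a subsequence $\{u_{j_n}\}$, and $y_n\in\mathbb{R}^N$ with $\int_{B_R(y_n)}|u_{j_n}|^2\,dx\geq\delta$. Setting $v_n(x):=u_{j_n}(x+y_n)$, translation invariance of the $W^s$-norm preserves the uniform bound. By reflexivity of $W^s(\mathbb{R}^N)$, along a further subsequence $v_n\rightharpoonup\varphi$ weakly in $W^s$, hence weakly in $H^s$. The compact embedding $H^s(\mathbb{R}^N)\hookrightarrow L^q_{\mathrm{loc}}(\mathbb{R}^N)$ for $q\in[2,2^*_s)$ gives strong local $L^q$ convergence and, after a diagonal extraction, a.e.\ pointwise convergence on $\mathbb{R}^N$. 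Finally, passing to the limit (using strong $L^2(B_R(0))$ convergence) in $\int_{B_R(0)}|v_n|^2\,dx=\int_{B_R(y_n)}|u_{j_n}|^2\,dx\geq\delta$ gives $\int_{B_R(0)}|\varphi|^2\,dx\geq\delta>0$, so $\varphi\neq 0$, as required.
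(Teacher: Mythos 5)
Your first and third steps are sound and essentially match the paper's argument: boundedness in $W^{s}(\mathbb{R}^{N})$ follows from the Nehari constraint, the fractional logarithmic Sobolev inequality (Lemma \ref{L1}) with $\alpha^{2}<\pi^{s}$, and a control of $\int A(|u_{n}|)\,dx$ (the paper obtains the latter from $\int A(|u_{n}|)\,dx\leq \int B(|u_{n}|)\,dx+|\omega|\|u_{n}\|_{L^{2}}^{2}$ together with \eqref{DB} and \eqref{DA1}, rather than your pointwise splitting at $e^{-3}$, but both routes work); and the extraction of a nonzero weak limit once vanishing is excluded is standard and identical to the paper's.

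The genuine gap is in the non-vanishing step, and you have located it yourself: the argument via Lemma \ref{CLLQ} and $\limsup_{n}\int|u_{n}|^{2}\mathrm{Log}|u_{n}|^{2}\,dx\leq 0$ yields a contradiction only when $\omega>0$, and the proposed repair for $\omega\leq 0$ --- forcing ``asymptotic saturation'' of Lemma \ref{L2} and invoking a ``rigidity (Gaussian equality case)'' of the fractional logarithmic Sobolev inequality --- is not a proof. There is no reason a vanishing minimizing sequence should saturate the bound of Lemma \ref{L2} (that bound concerns the fixed number $d(\omega)$, not an asymptotic identity along the sequence); the equality case of \eqref{SII} is not established in the paper or its cited reference; and even granting a characterization of extremizers you would need a quantitative stability estimate to convert near-equality into concentration. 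The paper avoids all of this: following \cite[Lemma 3.3]{CL}, it splits the mass integral at a small level $\delta$ and uses the two-sided control of $\int|u_{n}|^{2}\mathrm{Log}|u_{n}|^{2}\,dx$ supplied by the Nehari identity to derive the explicit lower bound \eqref{CPC} on $\int|u_{n}|^{q}\,dx$ for some $2<q<2^{\ast}_{s}$, valid for every $\omega\in\mathbb{R}$; non-vanishing then follows at once from Lemma \ref{CLLQ}. You should replace the rigidity plan by such a quantitative lower bound on an $L^{q}$ norm.
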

\begin{proof}
Let $\left\{ u_{n}\right\} \subseteq W^{s}(\mathbb{R}^{N})$ be a minimizing sequence for $d(\omega)$, then the sequence $\left\{ u_{n}\right\}$ is bounded in  $W^{s}(\mathbb{R}^{N})$. Indeed, it is clear that the sequence $\|u_{n}\|^{2}_{L^{2}}$ is bounded. Moreover, using the fractional logarithmic Sobolev inequality and recalling that $I_{\omega}(u_{n})=0$, we obtain
\begin{equation*}
\left(1-\frac{\alpha^{2}}{\pi^{s}}\right)\left\|(-\Delta)^{\frac{s}{2}}u_{n}\right\|^{2}_{L^{2}}\leq \mbox{Log}\left[\left(\frac{e^{-\left(\omega+N\right)}\Gamma\left(N/2s\right)}{s\,\alpha^{N/s}\Gamma\left(N/2\right)}\right)\left\|u_{n}\right\|^{2}_{L^{2}}\right]\left\|u_{n}\right\|^{2}_{L^{2}}.
\end{equation*}
Taking $\alpha>0$ sufficiently small, we see that $\|(-\Delta)^{\frac{s}{2}}u_{n}\|^{2}_{L^{2}}$ is bounded, so the sequence $\left\{ u_{n}\right\}$ is bounded in $H^{s}(\mathbb{R}^{N})$.  Then, using  $I_{\omega}(u_{n})=0$ again, and \eqref{DB} we obtain
\begin{equation*}
\int_{\mathbb{R}^{N}}A\left(\left|u_{n}\right|\right)dx\leq \int_{\mathbb{R}^{N}}B\left(\left|u_{n}\right|\right)dx+|\omega|\left\|u_{n}\right\|^{2}_{L^{2}}\leq C,
\end{equation*}
which implies, by \eqref{DA1} in the Appendix, that the sequence $\left\{ u_{n}\right\}$ is bounded in $W^{s}(\mathbb{R}^{N})$. Furthermore, since $W^{s}(\mathbb{R}^{N})$ is a reflexive Banach space, there is $v \in W^{s}(\mathbb{R}^{N})$ such that, up to a subsequence, $u_{n}\rightharpoonup v$ weakly in $W^{s}(\mathbb{R}^{N})$.

On the other hand, let $2<q<2^{\ast}_{s}$ and $0<\delta<1$.  Notice that  $\int_{\mathbb{R}^{N}}|u_{n}|^{2}\mbox{Log}\left|u_{n}\right|^{2}dx=\|(-\Delta)^{\frac{s}{2}}u_{n}\|^{2}_{L^{2}}+\omega\|u_{n}\|^{2}_{L^{2}}\geq -M(\omega)$ for sufficiently large $n$, where $M(\omega)$ is a positive constant depending only on $\omega$.  Arguing as in the proof of Lemma 3.3  in \cite{CL}, it is easy to see that
\begin{equation*}
2d(\omega)\leq (\delta^{-(q-2)}-\left[(q-2)\mbox{Log}\delta^{2}\right]^{-1})\int_{\mathbb{R}^{N}}\left|u_{n}\right|^{q}dx-\left[\mbox{Log}\delta^{2}\right]^{-1}M(\omega).
\end{equation*}
We now choose $\delta$ such that $-\left[\mbox{Log}\delta^{2}\right]^{-1}M(\omega)=d(\omega)\left((q-2)/(q+2)\right)$. Easy computations permit us to obtain 
\begin{equation}\label{CPC}
\int_{\mathbb{R}^{N}}\left|u_{n}\right|^{q}dx\left(e^{M(\omega)(q+2)/2d(\omega)}+\frac{d(\omega)}{M(\omega)(q+2)}\right)\geq 2d(\omega)\left(\frac{q+6}{q+2}\right)\geq d(\omega).
\end{equation}
Combining \eqref{CPC} and Lemma \ref{CLLQ} implies 
\begin{equation*}
\sup_{y\in\mathbb{R}^{N}} \int _{B_{1}(y)}\left|u_{n}\right|^{2}dx\geq\epsilon>0,
\end{equation*}
In this case we can choose $\left\{y_{n}\right\}\subset \mathbb{R}^{N}$ such that
\begin{equation*}
\int _{B_{1}(0)}\left|u_{n}(\cdot+y_{n})\right|^{2}dx\geq\epsilon^{\prime},
\end{equation*}
where $0<\epsilon^{\prime}<\epsilon$, and hence, due to the compactness of the embedding  $H_{\text{loc}}^{s}(\mathbb{R}^{N})\hookrightarrow L_{\text{loc}}^{2}(\mathbb{R}^{N})$, we deduce that the translated sequence $v_{n}:=u_{n}(\cdot+y_{n})$ has a weak limit $\varphi$ in $H^{s}(\mathbb{R}^{N})$ that is not identically zero. Also,  it follows that $\left\{v_{n}\right\}$ converges to $\varphi$ strongly in $L_{\text{loc}}^{q}(\mathbb{R}^{N})$ for any $q\in[2,2^{*}_{s})$. Therefore,  we infer that, after a translation if necessary, $\left\{u_{n}\right\}$ converges weakly in $W^{s}(\mathbb{R}^{N})$ and a.e. to a function $\varphi\neq0$.  Hence the result is established.
\end{proof}

\begin{proof}[ {\bf{Proof of Theorem \ref{ESSW}}}] The proof follows basically the same idea as the proof of \cite[Proposition 1.3 and Lemma 3.1]{AHA1} (see also \cite{AnguloArdila2016}). Let $\left\{ u_{n}\right\} \subseteq W^{s}(\mathbb{R}^{N})$ be a minimizing sequence for $d(\omega)$. From Lemma \ref{L5}, there exist $\varphi\in W^{s}(\mathbb{R}^{N})\setminus\left\{0\right\}$  such that, $v_{n}:=u_{j_{n}}(\cdot+y_{n})\rightharpoonup \varphi$
 weakly in $W^{s}(\mathbb{R}^{N})$ and  $\left\{v_{n}\right\}$ converges to $\varphi$ a.e and in $L_{\text{loc}}^{q}(\mathbb{R}^{N})$ for every $q\in[2,2^{*}_{s})$.

Now we prove that $\varphi\in \mathcal{G}_{\omega}$, that is,   $I_{\omega}(\varphi)=0$ and $S_{\omega}(\varphi)=d(\omega)$. First,  assume by contradiction that $I_{\omega}(\varphi)<0$. By elementary computations, we can see that there is $0<\lambda<1$ such that $I_{\omega}(\lambda \varphi)=0$. Then, from the definition of $d(\omega)$ and  the weak lower semicontinuity of the $L^{2}(\mathbb{R}^{N})$-norm, we have
\begin{equation*}
d(\omega)\leq \frac{1}{2}\left\|\lambda \varphi\right\|^{2}_{L^{2}}<\frac{1}{2}\left\|\varphi\right\|^{2}_{L^{2}}\leq \frac{1}{2}\liminf\limits_{n\rightarrow \infty}\left\|v_{n}\right\|^{2}_{L^{2}}=d(\omega),
\end{equation*}
which is impossible. On the other hand, assume that $I_{\omega}(\varphi)>0$. Since the embedding  $W^{s}(\mathbb{R}^{N})\hookrightarrow {H^{s}(\mathbb{R}^{N})}$ is continuous, we see that $v_{n}\rightharpoonup \varphi$ weakly in $H^{s}(\mathbb{R}^{N})$. Thus, we have  
\begin{align}
& \left\|(-\Delta)^{\frac{s}{2}}v_{n}\right\|^{2}_{L^{2}}-\left\|(-\Delta)^{\frac{s}{2}}v_{n}-(-\Delta)^{\frac{s}{2}}\varphi\right\|^{2}_{L^{2}}-\left\|(-\Delta)^{\frac{s}{2}}\varphi\right\|^{2}_{L^{2}}\rightarrow0,  \label{2C11}\\
&\left\| v_{n}\right\|^{2}_{L^{2}}-\left\| v_{n}-\varphi\right\|^{2}_{L^{2}}-\left\| \varphi \right\|^{2}_{L^{2}}\rightarrow0\label{2C12} 
\end{align}
as $n\rightarrow\infty$. Combining \eqref{2C11}, \eqref{2C12} and Lemma \ref{L4} leads to 
\begin{equation*}
\lim_{n\rightarrow \infty}I_{\omega}(v_{n}-\varphi)=\lim_{n\rightarrow \infty}I_{\omega}(v_{n})-I_{\omega}(\varphi)=-I_{\omega}(\varphi),
\end{equation*}
which combined with  $I_{\omega}(\varphi)> 0$ give us  that $I_{\omega}(v_{n}-\varphi)<0$ for sufficiently large $n$. Thus, by \eqref{2C12} and  applying the same argument as above, we see that 
\begin{equation*}
d(\omega)\leq\frac{1}{2} \lim_{n\rightarrow \infty}\left\|v_{n}-\varphi\right\|^{2}_{L^{2}}=d(\omega)-\frac{1}{2}\left\|\varphi\right\|^{2}_{L^{2}},
\end{equation*}
which is a contradiction because $\|\varphi\|^{2}_{L^{2}}>0$. Then, we deduce that $I_{\omega}(\varphi)=0$. In addition, by the weak lower semicontinuity of the $L^{2}(\mathbb{R}^{N})$-norm,  we have
\begin{equation}\label{inequa}
d(\omega)\leq \frac{1}{2}\left\|\varphi\right\|^{2}_{L^{2}}\leq \frac{1}{2} \liminf\limits_{n\rightarrow \infty}\left\|v_{n}\right\|^{2}_{L^{2}}=d(\omega),
\end{equation}
which implies, by the definition of $d(\omega)$, that $\varphi\in \mathcal{G}_{\omega}$. 

Now we claim that $v_{n}\rightarrow \varphi$ strongly in $W^{s}(\mathbb{R}^{N})$. Indeed, by \eqref{2C12}, we infer that $v_{n}\rightarrow \varphi$  in $L^{2}(\mathbb{R}^{N})$. Moreover,  since the sequence $\left\{ v_{n}\right\}$ is bounded in $H^{s}(\mathbb{R}^{N})$, from \eqref{DB} we obtain
\begin{equation*}
 \lim_{n\rightarrow \infty}\int_{\mathbb{R}^{N}}B\left(\left|v_{n}(x)\right|\right)dx=\int_{\mathbb{R}^{N}}B\left(\left|\varphi(x)\right|\right)dx,\end{equation*}
which combined with $I_{\omega}(v_{n})=I_{\omega}(\varphi)=0$  for any $n\in \mathbb{N}$,  gives
\begin{equation}\label{2BX1}
\lim_{n\rightarrow \infty}\left[\|(-\Delta)^{\frac{s}{2}}v_{n}\|_{L^{2}}^{2}+\int_{\mathbb{R}^{N}}A\left(\left|v_{n}(x)\right|\right)dx\right]= \|(-\Delta)^{\frac{s}{2}}\varphi\|_{L^{2}}^{2}+\int_{\mathbb{R}^{N}}A\left(\left|\varphi(x)\right|\right)dx.
\end{equation}
Moreover, by \eqref{2BX1},  the weak lower semicontinuity of the $L^{2}(\mathbb{R}^{N})$-norm and Fatou lemma, we deduce (see e.g. \cite[Lemma 2.4.4]{TA})
\begin{align}
& \lim_{n\rightarrow \infty}\|(-\Delta)^{\frac{s}{2}} v_{n}\|_{L^{2}}^{2}=\|(-\Delta)^{\frac{s}{2}} \varphi\|_{L^{2}}^{2}\label{N1}\\
& \lim_{n \to \infty}\int_{\mathbb{R}^{N}}A\left(\left|v_{n}(x)\right|\right)dx=\int_{\mathbb{R}^{N}}A\left(\left|\varphi(x)\right|\right)dx. \label{N2} \end{align}
Since $v_{n}\rightharpoonup \varphi$ weakly  in $H^{s}(\mathbb{R}^{N})$, it follows from  \eqref{N1}  that $v_{n}\rightarrow\varphi$  in $H^{s}(\mathbb{R}^{N})$.  Finally, by Proposition  \ref{orlicz} {ii)} in Appendix  and \eqref{N2} we have $v_{n}\rightarrow\varphi$  in $L^{A}(\mathbb{R}^{N})$. Thus, by definition of the $W^{s}({\mathbb{R}^{N}})$-norm, we infer that $v_{n}\rightarrow\varphi$  in $W^{s}({\mathbb{R}}^{N})$ which concludes the proof.
\end{proof}

\section{Stability of the standing waves}
\label{S:4}

\begin{proof}[ {\bf{Proof of Theorem \ref{2ESSW}}}] 
We argue by contradiction.  Suppose that $\mathcal{G}_{\omega}$ is not $W^{s}(\mathbb{R}^{N})$-stable. Then there exist $\epsilon>0$, a sequence $(u_{n,0})_{n\in \mathbb{N}}$ in $W^{s}(\mathbb{R}^{N})$ such that
\begin{equation}\label{T21}
\inf_{\psi\in\mathcal{G}_{\omega}}\left\|u_{n,0}-\psi\right\|_{W^{s}(\mathbb{R}^{N})}<\frac{1}{n},
\end{equation}
and a sequence $(t_{n})_{n\in \mathbb{N}}$ such that
\begin{equation}\label{3C2}
\inf_{\psi\in\mathcal{G}_{\omega}} \|u_{n}(t_{n})-\psi\|_{W^{s}(\mathbb{R}^{N})}\geq{\epsilon},
\end{equation}
where $u_{n}$ denotes the solution of the Cauchy problem \eqref{0NL} with initial data $u_{n,0}$. Set $v_{n}(x)= u_{n}(x,t_{n})$. By \eqref{T21} and conservation laws, we obtain
\begin{gather}\label{CE1}
\left\|v_{n}\right\|^{2}_{L^{2}}=\left\|u_{n}(t_{n})\right\|^{2}_{L^{2}}=\left\|u_{n,0}\right\|^{2}_{L^{2}}\rightarrow 2d({\omega})\\
S_{\omega}(v_{n})=S_{\omega}(u_{n}(t_{n}))=S_{\omega}(u_{n,0})\rightarrow d(\omega),\label{A12}
\end{gather}
as $n\rightarrow \infty$.  Moreover, by combining \eqref{CE1} and \eqref{A12} lead us to $I_{\omega}(v_{n})\rightarrow 0$ as $n\rightarrow \infty$. Next, define the sequence $f_{n}(x)=\rho_{n}v_{n}(x)$ with
\begin{equation*}
\rho_{n}=\exp\left(\frac{I_{\omega}(v_{n})}{2\|v_{n}\|^{2}_{L^{2}}}\right),
\end{equation*}
where $\exp(x)$ represent the exponential function. It is clear that $\lim_{n\rightarrow \infty}\rho_{n}=1$ and $I_{\omega}(f_{n})=0$ for any $n\in\mathbb{N}$. Furthermore, since the sequence $\left\{v_{n}\right\}$  is bounded in $W^{s}(\mathbb{R}^{N})$, we get $\|v_{n}-f_{n}\|_{W^{s}(\mathbb{R}^{N})}\rightarrow 0$ as $n\rightarrow \infty$. Then, by  \eqref{A12}, we have that $\left\{f_{n}\right\}$ is a minimizing sequence for $d(\omega)$. Thus, by Theorem \ref{ESSW}, up to a subsequence, there exist $(y_{n})\subset \mathbb{R}^{N}$ and a function  $\varphi\in\mathcal{G}_{\omega}$ such that
\begin{equation}\label{UEP1}
\| f_{n}(\cdot-y_{n})- \varphi\|_{W^{s}(\mathbb{R}^{N})}\rightarrow 0 \quad \text{as $n\rightarrow +\infty$}.
\end{equation}
Since $\varphi(\cdot+y_{n})\in \mathcal{G}_{\omega}$, remembering that $v_{n}= u_{n}(t_{n})$ and substituting this in \eqref{UEP1}, we get 
\begin{equation*}
\inf_{\psi\in\mathcal{G}_{\omega}}\left\|u_{n}(t_{n})-\psi\right\|_{ W^{s}(\mathbb{R}^{N})}\leq \|v_{n}-f_{n}\|_{W^{s}(\mathbb{R}^{N})}+ \inf_{\psi\in\mathcal{G}_{\omega}}\left\|f_{n}-\psi\right\|_{ W^{s}(\mathbb{R}^{N})}\rightarrow 0\quad 
\end{equation*}
as $n\rightarrow +\infty$, which is a contradiction with \eqref{3C2}. This finishes the proof.
\end{proof}

\section*{Acknowledgements}
The author wishes to express his sincere thanks to the referees for their valuable comments.

\section{Appendix}
\label{S:5}

We list some properties of the Orlicz space $L^{A}(\mathbb{R}^{N})$ that we have used above.  For a proof of such statements we refer to \cite[Lemma 2.1]{CL}.

\begin{proposition} \label{orlicz}
Let $\left\{u_{{m}}\right\}$ be a sequence in  $L^{A}(\mathbb{R}^{N})$, the following facts hold:\\
{\it i)} If  $u_{{m}}\rightarrow u$ in $L^{A}(\mathbb{R}^{N})$, then $A(\left|u_{{m}}\right|)\rightarrow A(\left|u\right|)$ in $L^{1}(\mathbb{R}^{N})$ as   $n\rightarrow \infty$.\\
{\it ii)} Let  $u\in L^{A}(\mathbb{R}^{N})$. If  $u_{m}\rightarrow u$ $a.e.$ in $\mathbb{R}^{N}$ and if 
\begin{equation*}
\lim_{n \to \infty}\int_{\mathbb{R}^{N}}A\left(\left|u_{m}(x)\right|\right)dx=\int_{\mathbb{R}^{N}}A\left(\left|u(x)\right|\right)dx,
\end{equation*}
then $u_{{m}}\rightarrow u$ in $L^{A}(\mathbb{R}^{N})$ as   $n\rightarrow \infty$.\\
{\it iii)} For any $u\in L^{A}(\mathbb{R}^{N})$, we have
\begin{equation}\label{DA1}
{\rm min} \left\{\left\|u\right\|_{L^{A}},\left\|u\right\|^{2}_{L^{A}}\right\}\leq  \int_{\mathbb{R}^{N}}A\left(\left|u(x)\right|\right)dx\leq {\rm max} \left\{\left\|u\right\|_{L^{A}},\left\|u\right\|^{2}_{L^{A}}\right\}.
\end{equation}
\end{proposition}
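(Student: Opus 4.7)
The plan is to prove (iii), then (ii), then (i), since (iii) converts Luxemburg-norm statements into statements about the modular $\rho(u)=\int_{\mathbb{R}^{N}}A(|u|)\,dx$. For (iii), set $k=\|u\|_{L^{A}}$ (trivial if $k=0$). Because $A$ is $\Delta_{2}$-regular and continuous, the map $\lambda\mapsto\int A(|u|/\lambda)\,dx$ is finite, continuous, and strictly decreasing on $(0,\infty)$, with limits $+\infty$ as $\lambda\to 0^{+}$ and $0$ as $\lambda\to\infty$, so the Luxemburg infimum is attained and $\int A(|u|/k)\,dx=1$. I then invoke two pointwise bounds: (a) convexity with $A(0)=0$, which gives $A(ts)\le t\,A(s)$ for $t\in[0,1]$ and $A(ts)\ge t\,A(s)$ for $t\ge 1$; and (b) a $\nabla_{2}$-type bound $A(ts)\le t^{2}A(s)$ for $t\ge 1$, $s\ge 0$, proved by a short case analysis on whether each of $s$ and $ts$ lies below or above the threshold $e^{-3}$ in the piecewise definition \eqref{IFD}. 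Applying these with $t=1/k$ and $s=|u|$ to the identity $\int A(|u|/k)\,dx=1$ yields $k\le\rho(u)\le k^{2}$ when $k\ge 1$ and $k^{2}\le\rho(u)\le k$ when $k\le 1$, which is \eqref{DA1}.

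For (ii), by (iii) it suffices to show $\int A(|u_{m}-u|)\,dx\to 0$. The $\Delta_{2}$ property gives $A(a+b)\le C(A(a)+A(b))$ for some $C>0$, so the integrands $h_{m}:=C(A(|u_{m}|)+A(|u|))-A(|u_{m}-u|)$ are pointwise nonnegative. Since $u_{m}\to u$ a.e., $h_{m}\to 2C\,A(|u|)$ a.e. Fatou's lemma combined with the modular convergence hypothesis $\int A(|u_{m}|)\,dx\to\int A(|u|)\,dx$ gives
\[
2C\!\int A(|u|)\,dx\le \liminf_{m\to\infty}\!\int h_{m}\,dx = 2C\!\int A(|u|)\,dx-\limsup_{m\to\infty}\!\int A(|u_{m}-u|)\,dx,
\]
whence $\int A(|u_{m}-u|)\,dx\to 0$, and (iii) delivers $\|u_{m}-u\|_{L^{A}}\to 0$.

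For (i), from $\|u_{m}-u\|_{L^{A}}\to 0$ and (iii) we have $\int A(|u_{m}-u|)\,dx\to 0$, so $u_{m}\to u$ in measure; extract a subsequence converging a.e. The same $\Delta_{2}$ inequality $A(|u_{m}|)\le C(A(|u_{m}-u|)+A(|u|))$ yields uniform integrability of $\{A(|u_{m}|)\}$ on $\mathbb{R}^{N}$: for any measurable $E$ and $m$ large, $\int_{E}A(|u_{m}|)\,dx$ is controlled by $C\int A(|u_{m}-u|)\,dx$ (globally small) plus $C\int_{E}A(|u|)\,dx$ (small whenever $|E|$ is small or sits in the tail, by absolute continuity of the integral of the fixed $L^{1}$ function $A(|u|)$). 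Continuity of $A$ gives $A(|u_{m}|)\to A(|u|)$ a.e. along the subsequence, so Vitali's convergence theorem yields $A(|u_{m}|)\to A(|u|)$ in $L^{1}(\mathbb{R}^{N})$; a standard subsequence-of-subsequence argument then promotes this to the full sequence.

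The principal technical obstacle is establishing the $\nabla_{2}$-type bound $A(ts)\le t^{2}A(s)$ for $t\ge 1$ that drives the nontrivial half of (iii). The regimes where $s$ and $ts$ lie on the same branch of \eqref{IFD} are handled by direct algebra ($A(ts)-t^{2}A(s)=-t^{2}s^{2}\log t^{2}\le 0$ on the small branch, and a factored quadratic $(t-1)[4e^{-3}st-e^{-6}(t+1)]\ge 0$ on the large branch). The delicate case is the mixed regime $s\le e^{-3}\le ts$, where one compares $3(ts)^{2}+4e^{-3}ts-e^{-6}$ with $-t^{2}s^{2}\log s^{2}$ and exploits $-\log s^{2}\ge 6$ on $[0,e^{-3}]$ to absorb the linear perturbation $4e^{-3}ts$ into the dominant $t^{2}s^{2}$ contribution.
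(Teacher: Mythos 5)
Your proof is correct. The paper itself offers no argument for this proposition --- it defers entirely to \cite[Lemma 2.1]{CL} --- so yours is necessarily a different, self-contained route, and it is essentially the standard Orlicz-space argument that the cited lemma encapsulates: the two-sided homogeneity estimates $tA(s)\le A(ts)\le t^{2}A(s)$ for $t\ge 1$ (the lower one from convexity and $A(0)=0$) give the modular--norm comparison (iii); the Fatou/Br\'ezis--Lieb device with the nonnegative $\Delta_{2}$-majorized integrand $h_{m}$ gives (ii); and convergence in measure plus Vitali gives (i), with the subsequence-of-subsequences upgrade. The delicate points all check out: in the mixed regime $s\le e^{-3}\le ts$ your reduction succeeds because, with $w=ts\ge e^{-3}$, one has $3w^{2}-4e^{-3}w+e^{-6}=3\left(w-e^{-3}\right)\left(w-\tfrac{1}{3}e^{-3}\right)\ge 0$, so that $A(ts)\le 6t^{2}s^{2}\le -t^{2}s^{2}\,\mathrm{Log}\,s^{2}=t^{2}A(s)$; and your attainment claim $\int_{\mathbb{R}^{N}}A(|u|/k)\,dx=1$ at $k=\left\|u\right\|_{L^{A}}$ is legitimate because $\Delta_{2}$-regularity makes the modular finite, continuous, and strictly decreasing in the scaling parameter (with the trivial case $u=0$ set aside, as you do). Two small remarks by way of comparison with what a streamlined write-up would look like: first, the three-case verification of $A(ts)\le t^{2}A(s)$ can be compressed into one line by observing that $s\mapsto A(s)/s^{2}$ is nonincreasing on $(0,\infty)$ --- it equals $-\mathrm{Log}\,s^{2}$ on the first branch of \eqref{IFD}, and on the second branch its derivative is $(2e^{-6}-4e^{-3}s)/s^{3}<0$ for $s\ge e^{-3}$; second, since $\mathbb{R}^{N}$ has infinite measure, Vitali's theorem in step (i) needs tightness in addition to uniform integrability, which you do supply (the tail of $\int_{E}A(|u_{m}|)\,dx$ is controlled by the globally small quantity $\int_{\mathbb{R}^{N}}A(|u_{m}-u|)\,dx$ plus the tail of the fixed function $A(|u|)\in L^{1}(\mathbb{R}^{N})$), but this deserves to be stated explicitly rather than parenthetically. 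Your ordering (iii)$\Rightarrow$(ii)$\Rightarrow$(i) is also the natural logical organization, since (iii) is what translates modular convergence into Luxemburg-norm convergence in both of the other parts.
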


\bibliographystyle{plain}
\bibliography{bibliografia}

\begin{thebibliography}{10}

\bibitem{AnguloArdila2016}
J.~Angulo and A.~H. Ardila.
\newblock Stability of standing waves for logarithmic {S}chrödinger equation
  with attractive delta potential.
\newblock {\em Indiana Univ. Math. J., to appear}, 2016.

\bibitem{DAP}
D.~Applebaum.
\newblock Lévy processes: from probability to finance and quantum groups.
\newblock {\em Not. Am. Math. Soc.}, 51(11):1336--1347, 2004.

\bibitem{DAPPL}
D.~Applebaum.
\newblock {\em Lévy Processes and Stochastic Calculus}, volume 116 of {\em
  Cambridge Studies in Advanced Mathematics}.
\newblock Cambridge: Cambridge University Press, 2009.

\bibitem{AHA1}
A.~H. Ardila.
\newblock Orbital stability of gausson solutions to logarithmic schrödinger
  equations.
\newblock {\em Electron. J. Diff. Eqns.}, 2016(335):1--9, 2016.

\bibitem{SB1}
S.~Bhattarai.
\newblock Existence and stability of standing waves for nonlinear {S}chrödinger
  systems involving the fractional {L}aplacian.
\newblock {\em arXiv:1604.01718}, 2016.

\bibitem{CAS}
I.~Bialynicki-Birula and J.~Mycielski.
\newblock Nonlinear wave mechanics.
\newblock {\em Ann. Phys}, 100:62--93, 1976.

\bibitem{PHST}
P.~H. Blanchard and J.~Stubbe.
\newblock Stability of ground states for nonlinear classical field theories.
\newblock volume 347 of {\em Lecture Notes in Physics}, pages 19--35. Springer
  Heidelberg, 1989.

\bibitem{PHBJ}
P.~H. Blanchard, J.~Stubbe, and L.~Vázquez.
\newblock On the stability of solitary waves for classical scalar fields.
\newblock {\em Ann. Inst. Henri-Poncaré, Phys. Théor.}, 47:309--336, 1987.

\bibitem{LBL}
H.~Brézis and E.~Lieb.
\newblock A relation between pointwise convergence of functions and convergence
  of functionals.
\newblock {\em Proc. Amer. Math. Soc.}, 88(3):486--490, 1983.

\bibitem{XCYS}
X.~Cabré and Y.~Sire.
\newblock Nonlinear equations for fractional {L}aplacians {I:} {R}egularity,
  maximum principles, and hamiltonian estimates.
\newblock {\em Ann. Inst. H. Poincaré Anal. Non Linéaire}, 31(1):23--53, 2014.

\bibitem{CL}
T.~Cazenave.
\newblock Stable solutions of the logarithmic {\Large s}chrödinger equation.
\newblock {\em Nonlinear. Anal., T.M.A.}, 7:1127--1140, 1983.

\bibitem{CB}
T.~Cazenave.
\newblock {\em Semilinear {S}chrödinger Equations}.
\newblock Courant Lecture Notes in Mathematics,10. American Mathematical
  Society, Courant Institute of Mathematical Sciences, 2003.

\bibitem{TA}
T.~Cazenave and A.~Haraux.
\newblock Equations d'évolution avec non-linéarité logarithmique.
\newblock {\em Ann. Fac. Sci. Toulouse Math.}, 2(1):21--51, 1980.

\bibitem{CALO}
T.~Cazenave and P.L. Lions.
\newblock Orbital stability of standing waves for some nonlinear {S}chrödinger
  equations.
\newblock {\em Comm. Math. Phys.}, 85(4):549--561, 1982.

\bibitem{YCHHG}
Y.~Cho, H.~Hajaiej, G.~Hwang, and T.~Ozawa.
\newblock On the {C}auchy problem of fractional {S}chrödinger equation with
  {H}artree type nonlinearity.
\newblock {\em Funkcial. Ekvac}, 56(2):193--224, 2013.

\bibitem{CHHHHW}
Y.~Cho, H.~Hajaiej, G.~Hwang, and T.~Ozawa.
\newblock On the orbital stability of fractional {S}chrödinger equations.
\newblock {\em Comm. Pure and App. Anal.}, 13(3):1267--1282, 2014.

\bibitem{ACNT}
A.~Cotsiolis and N.~Tavoularis.
\newblock On logarithmic sobolev inequalities for higher order fractional
  derivatives.
\newblock {\em C. R. Acad. Sci. Paris. Ser. I}, 304:205--208, 2005.

\bibitem{PDAMS}
P.~d'Avenia, M.~Squassina, and M.~Zenari.
\newblock Fractional logarithmic {S}chrödinger equations.
\newblock {\em Math. Meth. Appl. Sci.}, 38:5207--5216, 2015.

\bibitem{BGHJ}
B.~Guo, Y.~Han, and J.~Xin.
\newblock Existence of the global smooth solution to the period boundary value
  problem of fractional nonlinear {S}chrödinger equation.
\newblock {\em Appl. Math. Comput.}, 204:468--477, 2008.

\bibitem{BGDHL}
B.~Guo and D.~Huang.
\newblock Existence and stability of standing waves for nonlinear fractional
  {S}chrödinger equations.
\newblock {\em J. Math. Phys.}, 53(083702), 2012.

\bibitem{BGAZH}
B.~Guo and Z.~Huo.
\newblock Global well-posedness for the fractional nonlinear {S}chrödinger
  equation.
\newblock {\em Comm. Partial Differ. Equations}, 36:247--255, 2011.

\bibitem{XGMX}
X.~Guo and M.~Xu.
\newblock Some physical applications of fractional {S}chrödinger equation.
\newblock {\em J. Math. Phys.}, 47:82--104, 2006.

\bibitem{Haee333}
H.~Hajaiej.
\newblock Existence of minimizers of functionals involving the fractional
  gradient in the absence of compactness.
\newblock {\em J. Math. Anal. Appl.}, 399:17--26, 2013.

\bibitem{HH123}
H.~Hajaiej, L.~Molinet, T.~Ozawa, and B.~Wang.
\newblock Sufficient and necessary conditions for the fractional
  {G}agliardo--{N}irenberg inequalities and applications to {N}avier--{S}tokes
  and generalized {B}oson equations.
\newblock In T.~Ozawa and M.~Sugimoto, editors, {\em Harmonic Analysis and
  Nonlinear Partial Differential Equations}, RIMS Kkyroku Bessatsu B26, pages
  159--175, 2011.

\bibitem{HE123}
E.F. Hefter.
\newblock Application of the nonlinear {S}chrödinger equation with a
  logarithmic inhomogeneous term to nuclear physics.
\newblock {\em Phys. Rev}, 32(A):1201--1204, 1985.

\bibitem{JLFHVB}
J.~L\H{o}rinczi, F.~Hiroshima, and V.~Betz.
\newblock {\em Feynman-Kac-Type Theorems and Gibbs Measures on Path Space. With
  Applications to Rigorous Quantum Field Theory.}, volume~34 of {\em de Gruyter
  Studies in Mathematics}.
\newblock de Gruyter, Berlin, 2011.

\bibitem{DNGPEV}
E.~Di Nezza, G.~Palatucci, and E.~Valdinoci.
\newblock Hitchhiker's guide to the fractional {S}obolev spaces.
\newblock {\em Bull. Sci. Math.}, 136(5):521--573, 2012.

\bibitem{DWU}
D.~Wu.
\newblock Existence and stability of standing waves for nonlinear fractional
  {S}chrödinger equations with {H}artree type nonlinearity.
\newblock {\em J. Math. Anal. Appl.}, 411:530--542, 2014.

\bibitem{APLES}
K.G. Zloshchastiev.
\newblock Logarithmic nonlinearity in theories of quantum gravity: {O}rigin of
  time and observational consequences.
\newblock {\em Grav. Cosmol.}, 16(4):288--297, 2010.

\end{thebibliography}

\end{document}